\theoremstyle{plain}
\newtheorem{Pocz}{Poczatek}[section]
\newtheorem{Proposition}[Pocz]{Proposition}
\newtheorem{Theorem}[Pocz]{Theorem}
\newtheorem{Corollary}[Pocz]{Corollary}
\newtheorem{Lemma}[Pocz]{Lemma}
\newtheorem{Observation}[Pocz]{Observation}
\theoremstyle{definition}
\newtheorem{Definition}[Pocz]{Definition}
\theoremstyle{remark}
\newtheorem{Remark}[Pocz]{Remark}
\numberwithin{equation}{section}
\title[Universal spaces for asymptotic dimension zero]
{Universal spaces for asymptotic dimension zero}
\author{Yuankui Ma}
\address{Xi'an Technological University, No.2 Xuefu zhong lu, Weiyang district, Xi'an, China 710021}
\email{mayuankui@xatu.edu.cn}
\author{Jeremy Siegert}
\address{University of Tennessee, Knoxville, TN 37996, USA}
\email{jsiegert@vols.utk.edu}
\author{Jerzy Dydak}
\address{University of Tennessee, Knoxville, TN 37996, USA}
\email{jdydak@utk.edu}
\address{Xi'an Technological University, No.2 Xuefu zhong lu, Weiyang district, Xi'an, China 710021}
\email{jdydak@gmail.com}
\date{ \today
}
\keywords{asymptotic dimension, coarse geometry, ultrametric spaces, universal spaces}
\subjclass[2000]{Primary 54D35; Secondary 20F69}
\begin{document}
\maketitle
\begin{center}
\today
\end{center}

\tableofcontents

\begin{abstract}
Dranishnikov and Zarichnyi constructed a universal space in the coarse category of spaces of bounded geometry of asymptotic dimension $0$. In this paper we construct universal spaces in the coarse category of separable (respectively, proper) metric spaces of asymptotic dimension $0$. Our methods provide an alternative proof of Dranishnikov-Zarichnyi result.
\end{abstract}

\section{Introduction}

The asymptotic dimension of metric spaces was introduced by Gromov in \cite{Grom} as a large scale analog of the small scale covering dimension used in traditional topology. The property has been widely studied due in part to its application towards progress on the Novikov conjecture by Yu in \cite{Yu}, but also because of its own geometric appeal in large scale dimension theory. In the study of asymptotic dimension, as with the classical covering dimension, it is of interest to construct universal spaces for particular dimensions. That is, for a particular class of spaces $\mathcal{H}$ all have dimension $n$, with respect to some definition of dimension, we say that a space $X\in\mathcal{H}$ is universal with respect to dimension $n$ if every $Y\in\mathcal{H}$ emebeds into $X$ (where the nature of the embedding may be topological or coarse depending on the context one is working). In the case of proper metric spaces with bounded geometry and asymptotic dimension $n$ this was done by Dranishnikov and Zarichnyi in \cite{DZ}. In this paper we will construct a universal space (with respect to coarse embeddings) for two separate classes of metric spaces of asymptotic dimension $0$. The first is the class of separable metric spaces of asymptotic dimension $0$ and the second is the class of proper metric spaces of asymptotic dimension $0$. This is done by using the observation made in \cite{BDHM} that every separable metric space of asymptotic dimension $0$ is coarsely equivalent to an integral ultrametric space. From there one constructs spaces that are universal (with respect to isometric embeddings) in specific classes of bounded ultrametric spaces. Finally, the aforementioned universal spaces are constructed. 

Notice that T. Banakh and I. Zarichinyy \cite{BZ}  constructed universal spaces for coarsely homogeneous spaces of asymptotic dimension $0$.

\section{Preliminaries}

We begin with the few basic preliminary definitions needed in later sections. 

\begin{Definition}
A metric space $(X,d)$ is called an \textbf{ultrametric space} if in place of the usual triangle inequality for metric spaces the metric $d$ satisfies the stronger \emph{ultrametric triangle inequality} which says that for all $x,y,z\in X$

\[d(x,z)\leq\max\{d(x,y),d(y,z)\}\]
\end{Definition}

\begin{Definition}
Given a set $D$ of non-negative integers, a \textbf{$D$-ultrametric space} is an ultrametric space with all distances belonging to $D$.
If $D$ is the set of all non-negative integers, then a $D$-ultrametric space will be called an \textbf{integral ultrametric space}.
\end{Definition}

\begin{Definition}
A metric space $(X,d)$ is said to be of \text{asymptotic dimension} $0$ if for every uniformly bounded cover $\mathcal{U}$ of $X$, there is a uniformly bounded cover $\mathcal{V}$ that is refined by $\mathcal{U}$ and whose elements are disjoint. 
\end{Definition}

Alternatively, one could define for each $r>0$ the relation $\sim_{r}$ on $X$ be setting $x\sim_{r}y$ if $d(x,y)<r$. Then say that $x,y\in X$ are \textbf{r-connected} if there is a finite chain of elements $x=y_{0},y_{1},\ldots,y_{n}=y$ such that $y_{i}\sim_{r}y_{i+1}$ for $0\leq i\leq n-1$ and define the $\textbf{r-components}$ of $X$ to be maximally $r$-connected subsets of $X$. A metric space $(X,d)$ is of asymptotic dimension $0$ if and only if the collection of $r$-components of $X$ is uniformly bounded for every $r>0$. For a more in depth discussion of asymptotic dimension the reader is referred to \cite{Roe lectures}.

\begin{Definition}
A function $f:(X,d_{1})\rightarrow(Y,d_{2})$ is called:
\begin{enumerate}

\item \textbf{uniformly bornologous} if for all $R>0$ there is an $S>0$ such that if $d_{1}(x,y)\leq R$ then $d_{2}(f(x),f(y))\leq S$.
\item \textbf{proper} if for every bounded $B\subseteq Y$, $f^{-1}(B)$ is bounded in $X$.
\item \textbf{uniformly proper} if for every $R>0$ there is an $S>0$ such that if $B\subseteq Y$ is bounded by $R$, then $f^{-1}(B)$ is bounded by $S$.
\item \textbf{coarsely surjective} if there is an $R>0$ such that for every $y\in Y$ there is an $x\in X$ such that $d_{2}(f(x),y)\leq R$.
\item a \textbf{coarse equivalence} if it is uniformly bornologous, uniformly proper, and coarsely surjective.
\item a \textbf{coarse embedding} if it is uniformly bornologous and uniformly proper.
\end{enumerate}
\end{Definition}


The following result from \cite{BDHM} is the critical observation needed to construct the universal spaces in section \ref{universal spaces}.

\begin{Theorem}\label{countableintegral}
If $(X,d)$ is a separable metric space of asymptotic dimension zero, then
there is a countable integral ultrametric space $(Y,\rho)$ coarsely equivalent to $(X,d)$. Moreover, if $X$ is proper, then $Y$ can be chosen to have finite bounded subsets only.
\end{Theorem}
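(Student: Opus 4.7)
The plan is to exploit the characterization of asymptotic dimension zero via the $r$-component partitions, which are automatically nested as $r$ grows. For each positive integer $n$, let $\mathcal{P}_n$ denote the partition of $X$ into $n$-components; by asymptotic dimension zero each $\mathcal{P}_n$ is uniformly bounded by some constant $B_n$, and since $\sim_r$ strengthens as $r$ decreases, $\mathcal{P}_m$ refines $\mathcal{P}_n$ whenever $m\le n$. Because the relation $\sim_1$ is defined by a strict inequality, each $1$-component is open; since $X$ is separable, $\mathcal{P}_1$ is at most countable (each $1$-component must contain a point of a fixed countable dense set).

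Choose one representative from each element of $\mathcal{P}_1$ and let $Y$ be the resulting countable subset of $X$. Define
\[
\rho(y_1,y_2)=\begin{cases}0 & y_1=y_2,\\ \min\{n\ge 1:y_1,y_2\text{ lie in a common }n\text{-component of }X\} & y_1\ne y_2.\end{cases}
\]
This minimum exists because $y_1,y_2$ are in a common $n$-component once $n>d(y_1,y_2)$, and the ultrametric inequality for $\rho$ follows immediately from the fact that being in a common $n$-component is an equivalence relation. Thus $(Y,\rho)$ is a countable integral ultrametric space. To see coarse equivalence, let $f:X\to Y$ send each $x$ to the chosen representative of its $1$-component. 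If $d(x,x')\le R$, pick any integer $n>R$; then $x,x'$ share an $n$-component, and so do $f(x),f(x')$, giving $\rho(f(x),f(x'))\le n$ (uniform bornologousness). Conversely, if $\rho(f(x),f(x'))\le n$, then $f(x),f(x')$ lie in a common $n$-component of $X$, so $d(f(x),f(x'))\le B_n$, and since $d(x,f(x)),d(x',f(x'))\le B_1$, one has $d(x,x')\le B_n+2B_1$ (uniform properness). Finally $f$ restricts to the identity on $Y$, so it is coarsely surjective.

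For the proper case, it suffices to check that $Y$ has finite bounded subsets, i.e.\ that each $n$-component of $Y$ is finite. An $n$-component of $Y$ has the form $Y\cap C$ for a single $n$-component $C$ of $X$, and $C$ is bounded, hence has compact closure $\overline{C}$ by properness. If $Y\cap C$ were infinite, a sequence of its points (each representing a distinct $1$-component) would have a convergent subsequence in $\overline{C}$, forcing two of them within distance less than $1$, contradicting that they represent different $1$-components. The main obstacle I expect is simply the bookkeeping of constants in the coarse-equivalence verification and the subtle but essential observation that open-ness of $1$-components (guaranteed by the strict inequality in the definition of $\sim_r$) is what makes $\mathcal{P}_1$ countable in the separable case and finite inside each bounded set in the proper case.
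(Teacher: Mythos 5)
The paper gives no proof of this theorem; it is imported verbatim from \cite{BDHM}, so there is no in-paper argument to compare against. Your proof is correct and self-contained: the nested $n$-component partitions, the integral ultrametric $\rho(y_1,y_2)=\min\{n: y_1,y_2$ share an $n$-component$\}$, the retraction onto a set of $1$-component representatives, and the compactness argument for the proper case all check out, and this is essentially the standard argument underlying the cited result.
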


\section{Coarse disjoint unions}\label{CoarseDisjointUnions}
In this section we define the coarse disjoint union, which is a means of joining a family of metric spaces together in such a way that they are "coarsely independent" of one another. 

\begin{Definition}\label{DefCoarseDisjointUnion}
Given a family $\{(X_s,d_s)\}_{s\in S}$ of metric spaces, a \textbf{coarse disjoint union} of that family is a disjoint union $\coprod\limits_{s\in S} X_s$
equipped with a metric $d$ satisfying the following properties:\\
1. $d$ restricted to each $X_s$ equals $d_s$.\\
2. Given $M > 0$ there are bounded subsets $B_s$ of $X_s$, all but finitely many of them empty, such that if $x\in X_s\setminus B_s$ and $y\in X_t\setminus B_t$ for some $s\ne t$, then $d(x,y) > M$.
\end{Definition}

\begin{Observation} Notice that Condition 2 in \ref{DefCoarseDisjointUnion} can be split into the following two conditions:\\
2a. Every bounded subset $B$ of $\coprod\limits_{s\in S} X_s$ is contained
in $\coprod\limits_{s\in F} X_s$ for some finite $F\subset S$.\\
2b. Given $M > 0$ there is a bounded subset $B$ of $\coprod\limits_{s\in S} X_s$ such that if $x\in X_s\setminus B$ and $y\in X_t\setminus B$ for some $s\ne t$, then $d(x,y) > M$.
\end{Observation}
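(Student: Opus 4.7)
The plan is to show the two conditions (2a) and (2b) together are equivalent to condition (2). The backward implication is essentially direct, so the real content is in deriving (2a) from (2): the idea is that a bounded set cannot spread across too many components, since condition (2) forces points in distinct far-off components to be arbitrarily far apart.

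First I would handle the easy direction: assume (2), and fix $M>0$. Condition (2) yields bounded sets $B_s\subseteq X_s$, only finitely many nonempty. Setting $B:=\bigcup_s B_s$, I observe that $B$ is a finite union of bounded sets in a metric space, hence bounded (the bound across distinct components comes from the triangle inequality applied to fixed basepoints). By construction, if $x\in X_s\setminus B$ and $y\in X_t\setminus B$ with $s\ne t$, then in particular $x\notin B_s$ and $y\notin B_t$, so $d(x,y)>M$. This gives (2b). For (2a), suppose a bounded subset $B$ with $\diam(B)\le R$ met infinitely many components $X_s$. Applying (2) with $M:=R+1$ yields bounded sets $B_s$, nonempty only for $s$ in a finite set $F$. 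Since $B$ hits infinitely many $X_s$, I can choose $s\ne t$ both outside $F$ with points $x\in B\cap X_s$, $y\in B\cap X_t$. Then $x\notin B_s=\emptyset$ and $y\notin B_t=\emptyset$, so $d(x,y)>M>R$, contradicting $x,y\in B$.

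For the converse, I assume (2a) and (2b), and fix $M>0$. Apply (2b) to get a bounded $B\subseteq\coprod_s X_s$ with the stated separation property; by (2a) there is a finite $F\subseteq S$ with $B\subseteq\coprod_{s\in F}X_s$. I define $B_s:=B\cap X_s$ for $s\in F$ and $B_s:=\emptyset$ for $s\notin F$; each $B_s$ is bounded, and only finitely many are nonempty. To verify the separation condition, given $x\in X_s\setminus B_s$ and $y\in X_t\setminus B_t$ with $s\ne t$, I check case by case whether $s,t$ lie in $F$: in every case the definition of $B_s$ forces $x\notin B$ and $y\notin B$, so $x\in X_s\setminus B$ and $y\in X_t\setminus B$, and (2b) gives $d(x,y)>M$.

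The only subtlety worth flagging is the argument for (2a): one must apply (2) with $M$ strictly exceeding $\diam(B)$ and then exploit that, since only finitely many $B_s$ are nonempty, any bounded set touching infinitely many components must contain two points from distinct components with both lying outside their respective (empty) $B_s$'s. Everything else is bookkeeping.
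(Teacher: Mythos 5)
Your proof is correct. The paper states this as an Observation with no proof at all, so there is nothing to compare against; your argument --- deriving (2a) by contradiction after applying Condition 2 with $M>\diam(B)$, and in the converse reassembling the sets $B_s:=B\cap X_s$ from the single bounded set supplied by (2b) and (2a) --- is precisely the routine verification the authors leave implicit, and all of its steps check out.
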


\begin{Observation}
If a coarse disjoint union exists and each $X_s$ is non-empty, then $S$ is countable.
\end{Observation}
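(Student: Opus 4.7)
The plan is to exploit Condition 2a together with the fact that, in any metric space, the metric is finite-valued and hence the whole space is exhausted by a countable increasing union of bounded sets.

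First I would fix a basepoint $x_0$ in the coarse disjoint union $X := \coprod_{s \in S} X_s$ (possible since at least one $X_s$ is nonempty). Set $B_n := \{y \in X : d(x_0, y) \leq n\}$ for each positive integer $n$. Each $B_n$ is bounded, and because $d$ is a genuine (finite-valued) metric, every $y \in X$ satisfies $d(x_0, y) \leq n$ for some $n$, so $X = \bigcup_{n \in \NN} B_n$.

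Next I would apply Condition 2a of the preceding observation to each $B_n$: there is a finite $F_n \subset S$ such that $B_n \subset \coprod_{s \in F_n} X_s$. Setting $T := \bigcup_{n \in \NN} F_n$, the union $T$ is a countable union of finite sets, hence countable, and $X = \bigcup_n B_n \subset \coprod_{s \in T} X_s$. Since the family $\{X_s\}_{s \in S}$ is a disjoint union decomposition of $X$ and each $X_s$ is nonempty, it follows that $S \subset T$, and therefore $S$ is countable.

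There is really no serious obstacle here; the only point to notice is that Condition 2a is the exact statement needed, and that one should extract countability from the metric by the standard trick of covering $X$ with integer-radius balls around a basepoint. Note that Condition 2b (and the stronger bounded-separation statement) is not needed for this observation; only the boundedness-localization property 2a is used.
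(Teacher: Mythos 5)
Your argument is correct: the paper states this Observation without proof, and your route (exhaust $X$ by integer-radius balls about a basepoint, apply Condition 2a to each ball to get finite index sets, and take the countable union) is the natural intended argument. The only thing worth noting is that Condition 2a is itself only asserted, not proved, in the preceding Observation, so strictly speaking one should check it follows from Condition 2 of Definition \ref{DefCoarseDisjointUnion} (take $M$ larger than the diameter of the bounded set; the resulting $B_s$ show the set can meet at most one part outside the finite exceptional family), but this is consistent with the paper's level of detail.
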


\begin{Proposition}
A disjoint union $\coprod\limits_{s\in S} X_s$ is a coarse disjoint union of a family $\{(X_s,d_s)\}_{s\in S}$ of metric spaces if and only if is
equipped with a metric $d$ satisfying the following properties:\\
1. $d$ restricted to each $X_s$ equals $d_s$.\\
2. Given a sequence $\{x_n\}_{n\ge 1}$ of points in $\coprod\limits_{s\in S} X_s$ belonging to different parts $X_s$, one has $x_n\to\infty$ (that means $d(a,x_n)\to\infty$ for some, hence for all, $a\in \coprod\limits_{s\in S} X_s$).\\
3. Given $M > 0$ and a sequence of pairs $(x_n,y_n)$, $n\ge 1$, of points in $\coprod\limits_{s\in S} X_s$ such that $d(x_n,y_n) < M$ for all $n\ge 1$ and $x_n\to\infty$, there is $k\ge 1$ such that for each $n\ge k$ there is an index $s\in S$ so that $x_n,y_n\in X_s$.
\end{Proposition}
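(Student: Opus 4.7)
The plan is to prove the two implications separately, taking advantage of the preceding observation that Definition 3.1's condition (2) splits into (2a) ``every bounded set is contained in $\coprod_{s\in F} X_s$ for some finite $F$'' and (2b) ``for every $M>0$ there is a bounded $B$ such that points from distinct parts outside $B$ are at distance $> M$''. Condition (1) is identical on both sides, so the task reduces to showing (2a)$+$(2b) $\Leftrightarrow$ (2)$+$(3).

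For the forward direction, suppose $d$ makes $\coprod X_s$ a coarse disjoint union. To obtain (2), let $\{x_n\}$ come from pairwise distinct parts; if $\{x_n\}$ failed to tend to infinity, a subsequence would lie in some bounded set, which by (2a) meets only finitely many parts, contradicting distinctness. To obtain (3), fix $M>0$ and apply (2b) to produce a bounded set $B$; since $x_n\to\infty$ and $d(x_n,y_n) < M$, the triangle inequality forces $y_n\to\infty$, so eventually both $x_n$ and $y_n$ lie outside $B$, and then (2b) rules out their lying in distinct parts.

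For the reverse direction, assume (1), (2), (3). Condition (2a) follows immediately: if a bounded $B$ met infinitely many parts, then choosing $x_n\in B\cap X_{s_n}$ from pairwise distinct $X_{s_n}$ would produce, by (2), a sequence escaping to infinity while trapped in $B$, absurd. For (2b), I argue by contradiction: if some $M>0$ admits no suitable $B$, fix a basepoint $a$ and, for each $n\ge 1$, use the failure on the metric ball of radius $n$ about $a$ to select $x_n\in X_{s_n}$ and $y_n\in X_{t_n}$ with $s_n\ne t_n$, $d(x_n,y_n)\le M$, and both $x_n,y_n$ outside that ball. Then $x_n\to\infty$, so by (3) the points $x_n$ and $y_n$ eventually share a part, contradicting $s_n\ne t_n$. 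A bounded $B$ thus exists, and setting $B_s := B\cap X_s$ gives the $B_s$ required by Definition 3.1, with all but finitely many empty thanks to (2a).

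The only step with real content is the contradiction proof of (2b); everything else is bookkeeping. The main thing to watch is the distinction between ``bounded'' (a single global notion in $\coprod X_s$) and ``bounded in each part,'' since a finite union of parts need not be bounded in $\coprod X_s$; the separation property (2b) must be stated in terms of a single global $B$, and the per-part $B_s$ are then recovered by intersection.
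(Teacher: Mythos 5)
Your proof is correct and follows essentially the same route as the paper's: elementary contradiction arguments passing between the bounded-sets formulation of Definition 3.1 (organized via the Observation's split into (2a) and (2b)) and the sequential conditions (2) and (3). You in fact go slightly further than the paper, which writes out only the forward direction and dismisses the converse as ``similar,'' whereas your contradiction argument for (2b) using the balls of radius $n$ about a basepoint supplies that missing half explicitly.
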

\begin{proof}
Suppose is a coarse disjoint union in the sense of Definition \ref{DefCoarseDisjointUnion}. Given a sequence $\{x_n\}_{n\ge 1}$ of points in $\coprod\limits_{s\in S} X_s$ belonging to different parts $X_s$ such that $d(a,x_n)$ is not divergent to infinity for some $a\in \coprod\limits_{s\in S} X_s$, we may reduce this case to the one where there is $M > 0$ satisfying
$d(a,x_n) < M$ for all $n\ge 1$. There are bounded subsets $B_s$ of $X_s$, all but finitely many of them empty, such that if $x\in X_s\setminus B_s$ and $y\in X_t\setminus B_t$ for some $s\ne t$, then $d(x,y) > 2M$.
There are $t\ne s$ in $S$ such that $B_t=B_s=\emptyset$ and $x_k\in X_t$, $x_m\in X_s$ for some $k,m$, a contradiction as $d(x_k,x_m) < 2M$.

Given $M > 0$ and a sequence of pairs $(x_n,y_n)$, $n\ge 1$, of points in $\coprod\limits_{s\in S} X_s$ such that $d(x_n,y_n) < M$ for all $n\ge 1$ and $x_n\to\infty$, assume there is no $k\ge 1$ such that for each $n\ge k$ there is an index $s\in S$ so that $x_n,y_n\in X_s$. We may reduce this case to the one where $x_n$ and $y_n$ do not belong to the same $X_s$ for all $n\ge 1$. There are bounded subsets $B_s$ of $X_s$, all but finitely many of them empty, such that if $x\in X_s\setminus B_s$ and $y\in X_t\setminus B_t$ for some $s\ne t$, then $d(x,y) > M$.
There are $t\ne s$ in $S$ such that $B_t=B_s=\emptyset$ and $x_k\in X_t$, $y_k\in X_s$ for some $k$, a contradiction as $d(x_k,y_k) < M$.

The proof in the reverse direction is similar.
\end{proof}

\begin{Proposition}\label{TwoDifferentCoarseUnionsProp}
Given two coarse disjoint unions $(\coprod\limits_{s\in S} X_s,d)$ and $(\coprod\limits_{s\in S} Y_s,\rho)$ \\
1. isometric embeddings $i_s:X_s\to Y_s$, $s\in S$, induce a coarse embedding $i$ from $(\coprod\limits_{s\in S} X_s,d)$ to $(\coprod\limits_{s\in S} Y_s,\rho)$,\\
2. identity functions $i_s:X_s\to Y_s$, $s\in S$, induce a coarse equivalence $i$ from $(\coprod\limits_{s\in S} X_s,d)$ to $(\coprod\limits_{s\in S} Y_s,\rho)$.
\end{Proposition}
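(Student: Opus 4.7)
\emph{Proof plan.} The plan is to define the induced map $i \colon \coprod_{s\in S} X_s \to \coprod_{s\in S} Y_s$ piecewise by $i|_{X_s} = i_s$, and to verify for part 1 that $i$ is uniformly bornologous and uniformly proper by splitting on whether two given points lie in the same part or in different parts. The same-part case is immediate from the isometric assumption: for $x,y \in X_s$ one has $\rho(i(x),i(y)) = \rho_s(i_s(x), i_s(y)) = d_s(x,y) = d(x,y)$, handling both conditions at once on each piece.

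The substantive step is the different-parts case. Given $R > 0$, I will invoke the separation condition for $d$ to obtain bounded sets $B_s \subseteq X_s$, all but finitely many empty, forcing $d(x',y') > R$ whenever $x' \in X_s \setminus B_s$ and $y' \in X_t \setminus B_t$ with $s \ne t$. The contrapositive gives that at least one of $x,y$ lies in the corresponding $B_\bullet$, and when combined with $d(x,y) \le R$ this forces both $x$ and $y$ into the $R$-neighborhood $E$ of $\bigcup_s B_s$ computed in $(\coprod_{s} X_s, d)$. Condition 2a of the Observation following Definition \ref{DefCoarseDisjointUnion} implies that $E$ is bounded in $(\coprod_{s} X_s, d)$ and meets only finitely many parts $X_{s'}$, with each intersection bounded in the restriction metric $d_{s'}$. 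Applying the isometric maps $i_{s'}$, the image $i(E)$ becomes a finite union of bounded sets lying in distinct parts of $\coprod_{s} Y_s$; because pairwise distances between fixed representatives of finitely many parts are finite, $i(E)$ is $\rho$-bounded. This produces the uniform upper bound on $\rho(i(x),i(y))$ that uniformly bornologous requires. Uniform properness follows by the symmetric argument with $d$ and $\rho$ exchanged, using in addition that each isometric $i_s$ pulls bounded sets back to bounded sets.

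For part 2, identity maps are trivially isometric embeddings, so part 1 already supplies a coarse embedding. Since the underlying set map of $i$ is a bijection, $i$ is automatically coarsely surjective, hence a coarse equivalence. I expect the main obstacle to be the different-parts case of part 1: the separation condition only pins down one of two nearby points as lying in a bounded region, and promoting this to the statement that both points sit in a common bounded region whose $i$-image remains bounded relies on the $R$-neighborhood trick above together with the combinatorial fact that bounded subsets of a coarse disjoint union are contained in finitely many parts.
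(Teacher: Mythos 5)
Your argument is correct, but it takes a different route from the paper. The paper proves part 1 by contradiction using the sequential characterization of coarse disjoint unions established in the preceding Proposition: assuming pairs $(x_n,y_n)$ with $d(x_n,y_n)<M$ but $\rho(i(x_n),i(y_n))\to\infty$, it concludes that eventually $i(x_n)$ and $i(y_n)$ lie in a common part $Y_s$, whence $\rho(i(x_n),i(y_n))=d(x_n,y_n)$, a contradiction; it then asserts that 1 implies 2. You instead work directly from Definition \ref{DefCoarseDisjointUnion}: given $R$, you use the separation sets $B_s$ to show that any pair of $R$-close points in different parts lies in the $R$-neighborhood $E$ of $\bigcup_s B_s$, and you bound $\rho$ on $i(E)$ by observing that $E$ meets only finitely many parts and a finite union of bounded sets is bounded. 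Your version is more quantitative and self-contained (no appeal to the sequential reformulation), and it has the genuine advantage of explicitly verifying uniform properness, which the paper's written proof does not address even though a coarse embedding requires it; your symmetric argument, using that isometric embeddings pull bounded sets back to bounded sets, closes that gap cleanly. One small correction: Condition 2a does not give boundedness of $E$ --- that is elementary, since $E$ is the $R$-neighborhood of a finite union of bounded sets; what 2a gives is that the bounded set $E$ meets only finitely many parts, which is the fact you actually need. Your handling of part 2 (bijectivity gives coarse surjectivity, and the inverse is induced by isometries, hence also a coarse embedding) is a fine expansion of the paper's terse ``1) implies 2)''.
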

\begin{proof} Notice 1) implies 2), so only 1) needs to be proved.
Suppose, on the contrary, that there is $M > 0$ and a sequence of pairs $(x_n,y_n)$, $n\ge 1$, of points in $\coprod\limits_{s\in S} X_s$ such that $d(x_n,y_n) < M$ for all $n\ge 1$ but
$\rho(i(x_n),i(y_n))\to\infty$. There is $k\ge 1$ such that for each $n\ge k$ there is an index $s\in S$ so that $i(x_n),i(y_n)\in Y_s$. Therefore $\rho(i(x_n),i(y_n))=d(x_n,y_n)$ for all $n > k$, a contradiction.
\end{proof}

\begin{Lemma}\label{rDisjointUnionLemma}
Suppose $r > 0$ and $(X_i,x_i,d_i)$, $i=1,2$, are two disjoint pointed metric spaces.
The symmetric function $d$ on $(X_1\cup X_2)\times (X_1\cup X_2)$
extending both metrics defined
by $d(x,y)=\max(d_1(x,x_1),r,d_2(y,x_2))$, if $x\in X_1$ and $y\in X_2$, is a metric and $(X_1\cup X_2,d)$ is a coarse disjoint union of $(X_i,d_i)$, $i=1,2$.
Moreover, $d$ is an ultrametric if both $d_i$ are ultrametrics.
\end{Lemma}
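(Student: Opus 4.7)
The plan is to verify the three assertions in turn: that $d$ is a metric, that $(X_1\cup X_2,d)$ is a coarse disjoint union, and that $d$ is an ultrametric whenever both $d_i$ are. Symmetry is built into the definition, positivity on each $X_i$ follows from $d_i$ being a metric, and positivity of $d(x,y)$ for $x\in X_1$, $y\in X_2$ follows from the $r$ term since $r>0$.

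The real work for the first assertion is the triangle inequality $d(a,b)\le d(a,c)+d(c,b)$, which I would handle by cases according to the distribution of $a,b,c$ among $X_1$ and $X_2$. The all-same-part case is immediate from $d_i$ being a metric. When $a,b\in X_1$ and $c\in X_2$, the inequalities $d(a,c)\ge d_1(a,x_1)$ and $d(c,b)\ge d_1(x_1,b)$ combined with the triangle inequality in $X_1$ give $d(a,b)=d_1(a,b)\le d_1(a,x_1)+d_1(x_1,b)\le d(a,c)+d(c,b)$. The mixed case, say $a,c\in X_1$ and $b\in X_2$, splits further according to which of the three quantities $d_1(a,x_1)$, $r$, $d_2(b,x_2)$ realizes $d(a,b)$: if $r$ or $d_2(b,x_2)$ is the max, then $d(a,b)\le d(c,b)$ directly; if $d_1(a,x_1)$ is the max, then $d_1(a,x_1)\le d_1(a,c)+d_1(c,x_1)\le d(a,c)+d(c,b)$.

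For the coarse disjoint union property, given $M>0$ I would set $B_i=\{z\in X_i : d_i(z,x_i)\le M\}$ for $i=1,2$, which is bounded in $X_i$. If $x\in X_1\setminus B_1$ and $y\in X_2\setminus B_2$, then $d(x,y)=\max(d_1(x,x_1),r,d_2(y,x_2))>M$, exactly as required; since there are only two parts the ``all but finitely many empty'' clause is vacuous.

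Finally, for the ultrametric claim I would run the same case analysis but with $d(a,b)\le\max(d(a,c),d(c,b))$ in place of the ordinary triangle inequality. In each mixed configuration, every quantity contributing to the $\max$ defining $d(a,b)$ is bounded above by a quantity contributing to $d(a,c)$ or $d(c,b)$: the $d_i$-terms via the ultrametric inequality inside $X_i$, and the constant $r$ because $r$ already appears in any ``cross'' distance on the right. The main obstacle, though routine, is the bookkeeping of the mixed subcases; each reduces to an ultrametric inequality inside a single $X_i$, which is why the floor $r$ causes no difficulty.
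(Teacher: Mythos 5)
Your proposal is correct and follows essentially the same route as the paper: the same case analysis on how the three points distribute over $X_1$ and $X_2$ for the triangle and ultrametric inequalities (your subcase split on which term realizes the $\max$ is just a cosmetic variant of the paper's observation that the sum dominates each term of the $\max$), and the same choice of balls around the base points to witness the coarse disjoint union condition.
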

\begin{proof}
Suppose $d(x,z) > d(x,y)+d(y,z)$. Therefore all three points cannot belong to one space $X_i$ and it suffices to consider two cases:\\
Case 1. $x,z\in X_1$, $y\in X_2$.\\
Case 2. $x,y\in X_1$, $z\in X_2$.

In Case 1, $ d(x,y)+d(y,z)\ge d_1(x,x_1)+d_1(x_1,z)\ge d_1(x,z)=d(x,z)$, a contradiction.

In Case 2, $d(x,y)+d(y,z)\ge d_1(x,y)+d_1(x_1,y)\ge d_1(x,x_1)$
and $d(x,y)+d(y,z)\ge d(y,z)\ge \max(r,d_2(z,x_2))$,
so finally $d(x,y)+d(y,z)\ge \max(d_1(x,x_1),r,d_2(z,x_2))=d(x,z)$,
a contradiction again.

Assume each $d_i$ is an ultrametric and assume $d(x,z) > \max(d(x,y),d(y,z))$. Therefore all three points cannot belong to one space $X_i$ and it suffices to consider two cases:\\
Case A. $x,z\in X_1$, $y\in X_2$.\\
Case B. $x,y\in X_1$, $z\in X_2$.

In Case A, $\max(d(x,y),d(y,z))\ge \max(d_1(x,x_1),d_1(x_1,z))\ge d_1(x,z)=d(x,z)$, a contradiction.

In Case B, $\max(d(x,y),d(y,z))\ge \max(d_1(x,y),d_1(x_1,y))\ge d_1(x,x_1)$
and $\max(d(x,y),d(y,z))\ge d(y,z)\ge \max(r,d_2(z,x_2))$,
so finally $\max(d(x,y),d(y,z))\ge \max(d_1(x,x_1),r,d_2(z,x_2))=d(x,z)$,
a contradiction again.

If $M > 0$, put $B_1=B(x_1,M+1)$, $B_2=B(x_2,M+1)$ and notice $d(x,y) > M$
if $x\in X_1\setminus B_1$ and $y\in X_2\setminus B_2$.
Thus $(X_1\cup X_2,d)$ is a coarse disjoint union of $(X_i,d_i)$, $i=1,2$.
\end{proof}

\begin{Definition}\label{rDisjointUnionDef}
Suppose $r > 0$ and $(X_i,x_i,d_i)$, $i=1,2$, are two disjoint pointed metric spaces. The space constructed in \ref{rDisjointUnionLemma} will be called
the \textbf{$r$-union} of $(X_i,x_i,d_i)$, $i=1,2$.

Notice that if both $X_i$ are bounded and $r\ge \max(diam(X_1),diam(X_2))$,
then the distance from $x\in X_1$ to $y\in X_2$ is always $r$. That means base points are irrelevant in such a case and we can talk about the \textbf{$r$-union} of $(X_i,d_i)$, $i=1,2$.
\end{Definition}

\begin{Definition}\label{InfiniterDisjointUnionDef}
Suppose $r_i$, $i\ge 1$, is a (possibly finite) sequence of positive numbers and $(X_i,x_i,d_i)$, $i\ge 1$, are mutually disjoint pointed metric spaces. The
the \textbf{$\{r_i\}_{i\ge 1}$-union} of $(X_i,x_i,d_i)$, $i\ge 1$, is 
as the union of spaces $(Y_n,\rho_n)$, $n\ge 1$, defined inductively as follows:\\
1. $(Y_1,\rho_1)=(X_1,d_1)$.\\
2. $Y_{n+1}$ is the $r_n$-union of $(Y_n,x_n,\rho_n)$ and $(X_{n+1},x_{n+1},d_{n+1})$.

Notice that if all $X_i$ are bounded, $\{r_i\}_{i\ge 1}$ is an increasing sequence, and $r_n\ge diam(X_{n+1})$ for each $n\ge 1$,
then the distance from $x\in X_i$ to $y\in X_j$ is always $r_{j-1}$ if $i < j$. That means base points are irrelevant in such a case and we can talk about the \textbf{$\{r_i\}_{i\ge 1}$-union} of $(X_i,d_i)$, $i\ge 1$.
\end{Definition}

\begin{Lemma}\label{InfiniteDisjointUnionIsCoarse}
Suppose $r_i$, $i\ge 1$, is a sequence of positive numbers and $(X_i,x_i,d_i)$, $i\ge 1$, are mutually disjoint pointed metric spaces. The
the $\{r_i\}_{i\ge 1}$-union of $(X_i,x_i,d_i)$, $i\ge 1$, is a coarse disjoint union of $(X_i,d_i)$, $i\ge 1$, if $r_i$, $i\ge 1$, is finite and also if $r_i$ is diverging to infinity.
\end{Lemma}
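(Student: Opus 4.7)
The plan is to verify Condition~2 of Definition~\ref{DefCoarseDisjointUnion} directly, after first deriving a closed form for the distance between points lying in distinct parts $X_i$ and $X_j$. By induction on $j-i$, building on the formula for the $r$-union in Lemma~\ref{rDisjointUnionLemma}, I would show that for $x\in X_i$ and $y\in X_j$ with $i<j$,
\[
d(x,y)=\max(d_i(x,x_i),\,r_i,\,r_{i+1},\,\ldots,\,r_{j-1},\,d_j(y,x_j)).
\]
The base case $j=i+1$ is Lemma~\ref{rDisjointUnionLemma}. For the inductive step, since $Y_j$ is the $r_{j-1}$-union of $Y_{j-1}$ and $X_j$, one has
\[
d(x,y)=\max(\rho_{j-1}(x,x_{j-1}),\,r_{j-1},\,d_j(y,x_j));
\]
applying the inductive hypothesis with $x_{j-1}$ in place of $y$ computes $\rho_{j-1}(x,x_{j-1})=\max(d_i(x,x_i),r_i,\ldots,r_{j-2})$, and combining gives the claimed formula.

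With this formula in hand, consider first the case that $\{r_i\}$ is finite. Then the index set $S$ itself is finite and the ``all but finitely many empty'' requirement is automatic. Given $M>0$, set $B_i=\{x\in X_i:d_i(x,x_i)\le M\}$; each is bounded, and for $x\in X_i\setminus B_i$, $y\in X_j\setminus B_j$ with $i<j$ the formula yields $d(x,y)\ge d_i(x,x_i)>M$, as required.

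Now suppose $r_i\to\infty$. Given $M>0$, fix $N$ with $r_n>M$ for every $n\ge N$, and put $B_i=\{x\in X_i:d_i(x,x_i)\le M\}$ for $i\le N$ and $B_i=\emptyset$ for $i>N$. Only finitely many $B_i$ are non-empty, and each is bounded. For $x\in X_i\setminus B_i$, $y\in X_j\setminus B_j$ with $i<j$: if $j>N$ then $r_{j-1}\ge r_N>M$, so the formula gives $d(x,y)\ge r_{j-1}>M$; if $j\le N$, then $x\notin B_i$ forces $d_i(x,x_i)>M$, so again $d(x,y)>M$.

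The only step requiring genuine care is the inductive derivation of the explicit distance formula; the two verifications of the coarse disjoint union property then reduce to the routine choice of the $B_i$'s, dictated by either the finite length of the sequence or its divergence to infinity.
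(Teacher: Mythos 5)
Your proof is correct and follows essentially the same route as the paper's: both choose $B_i$ to be a bounded neighborhood of the base point $x_i$ for the finitely many small indices and take $B_i=\emptyset$ once $r_i>M$, with your explicit formula $d(x,y)=\max(d_i(x,x_i),r_i,\ldots,r_{j-1},d_j(y,x_j))$ merely making precise what the paper leaves implicit. (One tiny slip: for $j>N$ you should conclude $r_{j-1}>M$ directly from $j-1\ge N$ rather than via $r_{j-1}\ge r_N$, since the sequence $\{r_i\}$ is not assumed monotone.)
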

\begin{proof}
Assume $M > 0$. If $r_i$ is a finite sequence, put $B_i=B(x_i,M+1)$.
If $r_i$ is infinite choose $k$ such that $r_i > M$ for each $i\ge k$ and put $B_n=\emptyset$ for $n > k$. Notice $d(x,y) > M$ if $x\in X_i\setminus B_i$, $y\in X_j\setminus B_j$, and $i\ne j$.
\end{proof}

\begin{Corollary}
If $S$ is countable, then any family $\{(X_s,d_s)\}_{s\in S}$ of metric spaces has a coarse disjoint union. Moreover, if each $d_s$ is an (integral) ultrametric, then there is a coarse disjoint union equipped with an (integral) ultrametric.
\end{Corollary}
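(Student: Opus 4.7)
The plan is a straightforward bookkeeping exercise on top of the machinery already built in Section 3. Enumerate $S=\{s_1,s_2,\ldots\}$ (finite or infinite), discard any empty $X_{s_i}$ (they contribute nothing to a disjoint union), replace the remaining $X_{s_i}$ by set-theoretically disjoint isometric copies, and choose a basepoint $x_i\in X_{s_i}$ in each. Then pick a sequence of positive numbers $r_i$ that tends to infinity if $S$ is infinite, and apply the $\{r_i\}_{i\ge 1}$-union construction of Definition~\ref{InfiniterDisjointUnionDef}. Lemma~\ref{InfiniteDisjointUnionIsCoarse} immediately gives that this is a coarse disjoint union of the $(X_{s_i},d_{s_i})$, and condition 1 of Definition~\ref{DefCoarseDisjointUnion} holds by the inductive construction (each step is the $r_n$-union of the previous $Y_n$ with $X_{n+1}$, which by Lemma~\ref{rDisjointUnionLemma} restricts to the old metric on each summand).

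For the ultrametric case I would simply observe that Lemma~\ref{rDisjointUnionLemma} also states that the $r$-union of two ultrametric spaces is an ultrametric space; by induction the $\{r_i\}$-union of ultrametric spaces is again ultrametric. To get an \emph{integral} ultrametric, just choose $r_i$ to be positive integers diverging to $\infty$ (for instance $r_i=i$); since the $r$-union formula $d(x,y)=\max(d_1(x,x_1),r,d_2(y,x_2))$ produces integer values whenever its inputs are integers, the resulting metric takes values in the non-negative integers.

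The only mildly delicate points, neither of which is a genuine obstacle, are (a) checking that the countable inductive limit of the $(Y_n,\rho_n)$ is well-defined as a metric space (this is immediate because each inclusion $Y_n\hookrightarrow Y_{n+1}$ is isometric, so the common extension $d$ on $\bigcup_n Y_n$ is unambiguously a metric, and is an ultrametric if all $\rho_n$ are), and (b) the case when $S$ is finite, which is handled by taking a finite sequence $r_i$ in Lemma~\ref{InfiniteDisjointUnionIsCoarse}. With these remarks the corollary follows, so the proof is essentially an appeal to Lemmas~\ref{rDisjointUnionLemma} and~\ref{InfiniteDisjointUnionIsCoarse} together with a suitable choice of parameters $r_i$.
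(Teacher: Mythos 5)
Your proposal is correct and is exactly the argument the paper intends: the corollary is stated without proof as an immediate consequence of Definition~\ref{InfiniterDisjointUnionDef}, Lemma~\ref{rDisjointUnionLemma} (which supplies the ultrametric preservation), and Lemma~\ref{InfiniteDisjointUnionIsCoarse} (which supplies the coarse disjoint union property), with integer $r_i$ diverging to infinity handling the integral case. Your added remarks on discarding empty summands, choosing basepoints, and the well-definedness of the increasing union are sensible fillings-in of details the paper leaves implicit.
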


\begin{Proposition}\label{SeparabilityPropernessOfUnions}
Suppose $S$ is countable and $\{(X_s,d_s)\}_{s\in S}$ is a family of metric spaces. A coarse disjoint union of $\{(X_s,d_s)\}_{s\in S}$ is separable (proper) if and only if each $X_s$ is separable (proper).
\end{Proposition}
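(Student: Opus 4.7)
The plan is to prove each biconditional separately, in each case using that $d$ restricts to $d_s$ on each $X_s$, together with Condition 2a from the observation following Definition~\ref{DefCoarseDisjointUnion}, which confines every bounded subset of the union to $\coprod_{s\in F} X_s$ for some finite $F\subseteq S$.

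For separability, the forward direction follows from the standard fact that a subspace of a separable metric space is separable, so each $X_s\subseteq\coprod_{t\in S}X_t$ is separable. For the backward direction, pick a countable dense subset $D_s\subseteq X_s$ for each $s$; since $S$ is countable and $d|_{X_s}=d_s$, the countable union $\bigcup_{s\in S} D_s$ is dense in the coarse disjoint union.

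For the properness direction ``each $X_s$ proper implies the union is proper'', I would fix a closed bounded subset $C$ of the union, apply Condition 2a to obtain a finite $F\subseteq S$ with $C\subseteq \coprod_{s\in F} X_s$, and then observe that each $C\cap X_s$ is closed in $X_s$ (intersection of a closed set with $X_s$ in the subspace topology) and bounded in $X_s$ (since $d|_{X_s}=d_s$), so it is compact by properness of $X_s$; as $C$ is a finite union of compact sets, $C$ itself is compact. For the converse direction, I would take a closed bounded $C\subseteq X_s$ and reduce compactness of $C$ to the claim that $C$ coincides with its closure $\overline C$ in the union: the latter is a closed bounded subset of the proper union, hence compact. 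The reduction comes down to showing that $X_s$ is closed in the coarse disjoint union, so that $C$ closed in $X_s$ implies $C$ closed in the union. To prove this, suppose $(y_n)\subseteq X_s$ converges in the union to some $y$; then $\{y_n\}\cup\{y\}$ is bounded, so Condition 2a places it inside finitely many parts, and Condition 2 (or equivalently Condition 2b) applied with a sufficiently large $M$ forces the tail of $(y_n)$ and the point $y$ to lie in a common part, namely $X_s$.

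I expect the main obstacle to be this closedness step for the converse properness direction. The difficulty is to rule out the scenario where $y\in X_t$ with $t\ne s$; the argument requires combining the uniform metric separation between distinct parts outside a bounded set (Condition 2/2b) with the completeness of the union inherited from its properness, and exploiting the freedom to choose $M$ large enough that the relevant bounded set $B$ does not interfere with the limit point $y$.
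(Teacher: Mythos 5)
Your separability argument and your proof that properness of the parts implies properness of the union are correct, and together they amount to the paper's own one-line justification, which rests on the fact that every bounded subset of the union lies in finitely many parts and meets each in a bounded subset. The trouble is exactly where you predicted it: the closedness of $X_s$ in the union, which you need for the direction ``union proper $\Rightarrow$ each $X_s$ proper''. Your proposed fix --- choose $M$ so large that the exceptional bounded set $B$ of Condition 2b does not interfere with the limit point --- cannot work, because $B$ depends on $M$ and in general grows with it; Condition 2 imposes no constraint whatsoever on distances between points of different parts that lie \emph{inside} the exceptional sets $B_s$, and a convergent sequence together with its limit is itself bounded, so it may sit entirely inside such a region for every choice of $M$.

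In fact a part $X_s$ need not be closed in a coarse disjoint union, and the ``only if'' half of the properness claim fails as literally stated. Take $S=\{1,2\}$, $X_1=\{p\}$, $X_2=\{a_n\}_{n\ge 1}$ with $d_2(a_m,a_n)=|1/m-1/n|$, and $d(p,a_n)=1/n$; this union is isometric to $\{0\}\cup\{1/n : n\ge 1\}\subset\RR$. Both parts are bounded, so for every $M$ one may take $B_1=X_1$ and $B_2=X_2$, making Condition 2 vacuous: this is a legitimate coarse disjoint union. The union is compact, hence proper, yet $X_2$ is a closed (in itself) bounded non-compact space, so $X_2$ is not proper. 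The paper's proof does not address this direction either. Your argument (and the statement) can be repaired by any hypothesis that restores closedness of the parts --- for instance that each $X_s$ is complete (then a sequence in $X_s$ converging in the union is Cauchy in $X_s$ and its limit stays in $X_s$), or that distinct parts are at distance bounded away from $0$; the latter holds for all the $r$-unions and $\{r_i\}_{i\ge 1}$-unions actually used later in the paper, where distinct parts are at distance at least $\min_i r_i>0$, so the applications are unaffected.
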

\begin{proof}
It follows from the fact any bounded subset $B$ of the coarse disjoint union is a union of finitely many bounded subsets of some $X_s$.
\end{proof}

\section{Special ultrametric spaces}\label{SpecialUltrametricSpaces}

In this section we construct universal spaces (with respect to isometric embeddings) for specific classes of bounded ultrametric spaces. More specifically, for a finite subset $D\subseteq\mathbb{N}$ that contains $0$ we construct universal spaces for the class of countable $D$-ultrametric spaces, and for each $m\geq 1$ we construct an universal space for the class of $D$-ultrametric spaces with at most $m$ points. The spaces constructed in this section serve as the building blocks for the universal spaces constructed in section \ref{universal spaces}.

\begin{Lemma}\label{BasicUltraSplittingLemma}
Suppose $(X,d)$ is an ultrametric space, $x_0\in X$, $0 < r < s$,
$x_1, x_2\in X$, and $d(x_1,x_0)=r$, $d(x_2,x_0)=s$. If there are no points in $X$ such that $r < d(x,x_0) < s$ or $d(x,x_0) > s$, then
$(X,d)$ is isometric to the $s$-union of $X_1$ and $X_2$,
where $X_1:= \{x\in X | d(x,x_0) \leq r\}$ and $X_2:= \{x\in X |  d(x,x_0) =s\}$.
\end{Lemma}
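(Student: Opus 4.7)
My plan is to prove this by direct verification: first show that the hypotheses force $X = X_1 \cup X_2$, and then check that the ambient ultrametric $d$ agrees with the $s$-union metric built from $(X_1, x_0, d|_{X_1})$ and $(X_2, x_2, d|_{X_2})$. Since $d|_{X_i}$ tautologically matches the restriction of the $s$-union metric on each part, the only real content is computing cross-distances $d(x,y)$ for $x \in X_1$, $y \in X_2$.

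For the first step, pick any $x \in X$. The value $d(x, x_0)$ is a nonnegative real. If $d(x, x_0) \le r$, then $x \in X_1$. The hypothesis rules out $r < d(x, x_0) < s$ and $d(x, x_0) > s$, leaving only the possibility $d(x, x_0) = s$, which places $x$ in $X_2$. Note also that $x_0 \in X_1$ and $x_2 \in X_2$, so both pieces and their basepoints are available.

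For the second step, recall the standard consequence of the ultrametric triangle inequality: if $d(a,b) \ne d(b,c)$, then $d(a,c) = \max(d(a,b), d(b,c))$. Fix $x \in X_1$ and $y \in X_2$. Then $d(x, x_0) \le r < s = d(x_0, y)$, so the isoceles property forces $d(x, y) = s$. On the other hand, $y \in X_2$ gives $d(y, x_0) = s = d(x_2, x_0)$, and applying the ultrametric inequality inside $X_2$ yields $d(y, x_2) \le s$. Consequently the $s$-union formula from Lemma \ref{rDisjointUnionLemma} evaluates to
\[
\max\bigl(d(x, x_0),\, s,\, d(y, x_2)\bigr) = s = d(x,y),
\]
so the identity map $X \to X_1 \cup X_2$ is an isometry onto the $s$-union.

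I do not expect any serious obstacle; the entire argument is a bookkeeping consequence of the ultrametric isoceles property and the exclusion hypotheses. The only subtlety worth double-checking is that $s$ genuinely equals $\max(d(x,x_0), s, d(y,x_2))$ rather than just dominating $d(x,x_0)$: this is where the bound $d(y, x_2) \le s$ (obtained via the ultrametric inequality applied to $x_0, y, x_2$, all in the closed $s$-ball around $x_0$) is needed, and it would fail without the upper bound excluding points with $d(x, x_0) > s$.
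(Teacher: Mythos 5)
Your proof is correct and follows essentially the same route as the paper: both arguments reduce to checking that the cross-distance $d(x,y)$ for $x\in X_1$, $y\in X_2$ equals $s$ via the ultrametric isoceles property, together with the diameter bounds $\diam(X_1)\leq r$ and $\diam(X_2)\leq s$ that make the $s$-union formula collapse to $s$. Your explicit verification that $\max(d(x,x_0),s,d(y,x_2))=s$ is just a slightly more spelled-out version of the paper's observation that the diameters are controlled.
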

\begin{proof}
Notice that $diam(X_1)\leq r$ and $diam(X_2)\leq s$. Given $x\in X_1$
and $y\in X_2$ one has $d(x,y)=s$. Indeed, $d(y,x_0)=s$, $d(x,x_0)\leq r < s$, so $d(y,x_0)\leq \max(d(x,y),d(x,x_0))$ is possible only if $d(x,y)=s$.
That is sufficient to conclude the proof.
\end{proof}

\begin{Corollary}
\label{UltraAsDisjUnion}
Suppose $(X,d)$ is an ultrametric space, $x_0\in X$ and points $x_n\in X\setminus \{x_0\}$, $n\ge 1$, are chosen such that
the sequence $\{r_n=d(x_n,x_0)\}_{n\ge 1}$ is strictly increasing and diverges to infinity. Put $X_1= \{x\in X | d(x,x_0) \leq r_{1}\}$ , and define $X_{n}$, $n \ge 2$, as $\{x\in X | d(x,x_0)= r_{n}\}$. If for every $x\in X\setminus X_1$ there is $i\ge 2$
such that $d(x,x_0)=r_i$, then
the natural function from the $\{r_n\}_{n\ge 1}$-union of all $(X_n,d)$ to
$X$ is an isometry. Thus $(X,d)$ is a coarse disjoint union of some of its bounded subsets.
\end{Corollary}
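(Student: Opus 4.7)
The plan is to iterate Lemma \ref{BasicUltraSplittingLemma} level by level to decompose $X$ into the pieces $X_n$, and then appeal to Lemma \ref{InfiniteDisjointUnionIsCoarse} to upgrade the resulting isometric description to a coarse disjoint union.

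First I would note that the standing assumption (every $x \in X \setminus X_1$ satisfies $d(x,x_0) = r_i$ for some $i \ge 2$) together with the fact that $r_n \to \infty$ implies that $\{X_n\}_{n \ge 1}$ partitions $X$, and that each $X_n$ is bounded of diameter at most $r_n$, since for $x,y \in X_n$ the ultrametric inequality yields $d(x,y) \le \max(d(x,x_0), d(y,x_0)) \le r_n$. The main step is then an induction on $n$ showing that the subspace $Z_n := \{x \in X : d(x,x_0) \le r_n\}$, equipped with the restricted metric, is isometric to the iterated union of $X_1, \ldots, X_n$ built according to Definition \ref{InfiniterDisjointUnionDef}. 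The base case $n=1$ is immediate since $Z_1 = X_1$. For the inductive step, I would apply Lemma \ref{BasicUltraSplittingLemma} to the ultrametric subspace $Z_{n+1}$ with $r = r_n$, $s = r_{n+1}$, and witnesses $x_n$, $x_{n+1}$; its hypothesis is satisfied inside $Z_{n+1}$ because no point of $Z_{n+1}$ has $d(\cdot,x_0) > r_{n+1}$ by definition of $Z_{n+1}$, and by the standing assumption no point has distance from $x_0$ strictly between $r_n$ and $r_{n+1}$. The lemma then identifies $Z_{n+1}$ isometrically with the $r_{n+1}$-union of $Z_n$ and $X_{n+1}$, which together with the inductive hypothesis gives the claim for $n+1$.

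Since $r_n \to \infty$, the subspaces $Z_n$ exhaust $X$, and the isometries produced in the inductive step are compatible by construction, so they glue to an isometry between $X$ and the $\{r_n\}$-union of the bounded pieces $(X_n, d)$. The second assertion follows directly from Lemma \ref{InfiniteDisjointUnionIsCoarse}, applied to the diverging sequence $r_n$, which guarantees that this iterated union is automatically a coarse disjoint union of the (bounded) $X_n$'s. The main obstacle is bookkeeping: at each stage one must verify that the $s$-union produced by Lemma \ref{BasicUltraSplittingLemma} really matches the corresponding building block in the recursive recipe of Definition \ref{InfiniterDisjointUnionDef}, and in particular that the intrinsic metric on $Z_{n+1}$ coincides with the metric obtained from the join construction, including the correct choice of base points $x_n$ and $x_{n+1}$ at each step.
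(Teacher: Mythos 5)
Your proposal is correct and follows essentially the same route as the paper: the paper's (one-line) proof likewise observes that the partial unions $Y_{n+1}$ from Definition \ref{InfiniterDisjointUnionDef} coincide with the balls $\{x\in X \mid d(x,x_0)\le r_{n+1}\}$ by repeated application of Lemma \ref{BasicUltraSplittingLemma}, with the coarse disjoint union conclusion supplied by Lemma \ref{InfiniteDisjointUnionIsCoarse}. Your write-up just makes the induction and the hypothesis checks explicit.
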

\begin{proof}
Notice spaces $Y_{n+1}$ in \ref{InfiniterDisjointUnionDef} are identical with
$\{x\in X | d(x,x_0) \leq r_{n}\}$ by applying \ref{BasicUltraSplittingLemma}.
\end{proof}

\begin{Lemma}
Suppose $D$ is a finite set of non-negative integers and $(X,d)$ is a $D$-ultrametric space.
Given $m=\max(D) > 0$ 
the relation $x\sim y$ defined as $d(x,y) < m$ is an equivalence relation
such that the distance between points in different equivalence classes is exactly $m$. Therefore $(X,d)$ is isometric to the $\{r_i\}_{i=1}^k$-union of
all the equivalence classes, where $r_i=m$ for each $i\leq k$.
\end{Lemma}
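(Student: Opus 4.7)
The plan is to verify the three assertions in order: first that $\sim$ is an equivalence relation, second that the inter-class distance is exactly $m$, and third that these two facts let us identify $(X,d)$ with the described $\{r_i\}$-union.

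First I would check that $\sim$ is an equivalence relation. Reflexivity follows because $d(x,x) = 0 < m$ (here we use $m > 0$), and symmetry is immediate from the symmetry of $d$. Transitivity is the only step that uses the ultrametric hypothesis: if $d(x,y) < m$ and $d(y,z) < m$, then
\[
d(x,z) \leq \max(d(x,y),d(y,z)) < m,
\]
so $x \sim z$. For the claim about inter-class distance, suppose $x \not\sim y$. Then by definition $d(x,y) \geq m$. But $d(x,y)$ is a value of the metric, hence belongs to $D$, and $m = \max(D)$, so $d(x,y) = m$ exactly.

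Next I would deduce the isometry. Let $\{X_i\}_{i=1}^{k}$ be an enumeration of the equivalence classes (where $k$ may be finite or countably infinite). Each $X_i$ is bounded since every within-class distance is strictly less than $m$, so $\mathrm{diam}(X_i) \leq m$ (in fact strictly smaller, but $\leq m$ suffices). Taking $r_i = m$ for all $i$, the sequence $\{r_i\}$ is weakly increasing and satisfies $r_n \geq \mathrm{diam}(X_{n+1})$, so the remark at the end of Definition \ref{InfiniterDisjointUnionDef} applies: the $\{r_i\}$-union of the $(X_i,d)$ is well-defined without reference to base points, and by construction any two points lying in distinct pieces $X_i$, $X_j$ (with $i < j$) are at distance $r_{j-1} = m$. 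The obvious bijection from this union to $X$ preserves distances within each class by clause~1 of Definition \ref{DefCoarseDisjointUnion}, and preserves distances between classes because both the union and $X$ assign the value $m$, as established above. Hence it is an isometry.

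The only point that needs any attention is verifying that the hypotheses of the parenthetical remark in Definition \ref{InfiniterDisjointUnionDef} are met by a constant sequence $r_i \equiv m$, i.e., that "increasing" there is read as non-decreasing; the bounded-diameter condition and the matching of inter-class distances are then both automatic. Beyond that, every step is a direct application of ultrametricity together with the fact that $D$ is finite with maximum $m$, so I do not expect any substantive obstacle.
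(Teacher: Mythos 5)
Your proposal is correct and follows the same route as the paper's (much terser) proof: transitivity of $\sim$ via the ultrametric inequality, and the observation that any inter-class distance lies in $D$ and is at least $m=\max(D)$, hence equals $m$. The extra care you take with the constant sequence $r_i\equiv m$ and the identification with the $\{r_i\}$-union is a reasonable elaboration of what the paper leaves implicit.
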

\begin{proof}
The relation is clearly an equivalence one due to the fact $d(x,y)\leq \max(d(x,z),d(y,z))$ for all $x,y,z\in X$. Also, points in different equivalence classes are at distance $m$.
\end{proof}

\begin{Proposition}\label{boundedfiniteuniversal}
Given a finite set $D$ of non-negative integers and given $m\ge 1$ there is a finite ultrametric space $FU(m,D)$ such that any $D$-ultrametric space $X$ containing at most $m$ points isometrically embeds in $FU(m,D)$.
\end{Proposition}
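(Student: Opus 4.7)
The plan is to induct on $|D|$. The base case, where $\max(D)=0$ (so $D\subseteq\{0\}$), is trivial: every $D$-ultrametric space has at most one point, and I take $FU(m,D)$ to be a singleton.

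For the inductive step, set $M:=\max(D)>0$ and $D':=D\setminus\{M\}$. Then $|D'|<|D|$ and, by induction, a finite $D'$-ultrametric space $Y:=FU(m,D')$ has been constructed; note $\diam(Y)\le\max(D')<M$. I define $FU(m,D)$ to be the $\{r_i\}_{i\ge 1}$-union, with constant $r_i=M$, of $m$ disjoint copies $Y_1,\ldots,Y_m$ of $Y$, as in Definition \ref{InfiniterDisjointUnionDef}. Iterated application of Lemma \ref{rDisjointUnionLemma} guarantees that $FU(m,D)$ is a finite ultrametric space with all distances in $D'\cup\{M\}=D$, and the ``base points are irrelevant'' remark in Definition \ref{InfiniterDisjointUnionDef} tells me that every pair of points from distinct copies sits at distance exactly $M$.

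To verify universality, I would take a $D$-ultrametric space $X$ with $|X|\le m$ and invoke the lemma immediately preceding the proposition: the relation $x\sim y\iff d(x,y)<M$ partitions $X$ into equivalence classes $C_1,\ldots,C_k$, with $k\le|X|\le m$, each $C_i$ a $D'$-ultrametric space with at most $m$ points, and any two points from different classes at distance exactly $M$. By the inductive hypothesis each $C_i$ admits an isometric embedding $\iota_i:C_i\hookrightarrow Y$; sending $\iota_i(C_i)$ into the $i$-th copy $Y_i\subseteq FU(m,D)$ defines a map $\iota:X\to FU(m,D)$ that preserves intra-class distances (by $\iota_i$) and cross-class distances (both equal $M$), so $\iota$ is the desired isometric embedding.

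The main obstacle is simply the bookkeeping of the iterated $r$-union: it is defined two spaces at a time with a chosen base point, so one has to verify that the construction is insensitive to these choices when all summands have diameter $\le M$, and that padding with the unused copies $Y_{k+1},\ldots,Y_m$ does no harm to the embedding produced above. Both issues are resolved directly by the explicit ``base points are irrelevant'' observations accompanying Definitions \ref{rDisjointUnionDef} and \ref{InfiniterDisjointUnionDef}.
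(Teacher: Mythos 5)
Your proof is correct and follows essentially the same route as the paper: induct on $|D|$, take $FU(m,D)$ to be an iterated $\max(D)$-union of copies of $FU(m,D\setminus\{\max(D)\})$, and embed a given $X$ by splitting it into the equivalence classes of $d(x,y)<\max(D)$ and mapping each class into its own copy. You merely spell out the universality verification (via the lemma preceding the proposition) that the paper leaves implicit, and use $m$ copies where the paper's $\{r_i\}_{i=1}^m$-union formally yields $m+1$; both suffice.
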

\begin{proof}
Let $FU(m,\{0\})$ be a one-point metric space. Suppose spaces $FU(m,D)$ are known for all $D$ containing at most $n$ integers and $C$ contains $(n+1)$ integers with $k=\max(C)$. Define $FU(m,C)$ as the $\{r_i\}_{i=1}^m$-union of $FU(m,C\setminus \{k\})$ where $r_i=k$ for each $i\leq m$.
\end{proof}

\begin{Proposition}\label{boundedcountableuniversal}
Given a finite set $D$ of non-negative integers there is a countable $D$-ultrametric space $CU(D)$ such that any countable $D$-ultrametric space $X$ isometrically embeds in $CU(D)$.
\end{Proposition}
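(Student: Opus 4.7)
The plan is to induct on the cardinality of $D$. Since any nonempty metric space has $0$ among its distances, we may assume $0\in D$. For the base case $D=\{0\}$ every $D$-ultrametric space has at most one point, so we take $CU(\{0\})$ to be a single point. For the inductive step, set $m=\max(D)$ and $D'=D\setminus\{m\}$, and assume $CU(D')$ is a countable $D'$-ultrametric space universal for countable $D'$-ultrametric spaces. Define $CU(D)$ to be the $\{r_i\}_{i\ge 1}$-union of countably many pairwise disjoint copies of $CU(D')$, with $r_i=m$ for every $i$. This is a countable union of countable sets, hence countable; Lemma \ref{rDisjointUnionLemma} (applied inductively through Definition \ref{InfiniterDisjointUnionDef}) guarantees that the resulting metric is an ultrametric; and since each copy of $CU(D')$ has diameter at most $\max(D')<m$, the concluding observation in Definition \ref{InfiniterDisjointUnionDef} shows that the distance between any two points in distinct copies is exactly $m$. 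Thus every distance in $CU(D)$ lies in $D'\cup\{m\}=D$.

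For universality, let $X$ be a countable $D$-ultrametric space. Apply the lemma preceding Proposition \ref{boundedfiniteuniversal} to the equivalence relation $x\sim y\iff d(x,y)<m$: this realizes $X$ isometrically as an $\{m,m,\ldots\}$-union of its equivalence classes, each of which is a countable $D'$-ultrametric space of diameter strictly less than $m$. Since $X$ is countable it has at most countably many such classes, so we may enumerate them and use the inductive hypothesis to fix, for each class $C_i$, an isometric embedding $f_i\colon C_i\to CU(D')_i$ into the $i$-th copy of $CU(D')$ sitting inside $CU(D)$. Taking the union of these gives a well-defined map $f\colon X\to CU(D)$.

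The main thing to verify is that $f$ is an isometric embedding, and here the construction is tailored exactly so that this is automatic. Within each class $f$ agrees with $f_i$ and therefore preserves distances. For $x\in C_i$ and $y\in C_j$ with $i\ne j$, the diameter bound ensures both $d(x,y)$ and $d(f(x),f(y))$ equal $m$, again by the basepoint-irrelevance portion of Definition \ref{InfiniterDisjointUnionDef} applied on both the source and the target side. If $X$ has only finitely many equivalence classes we simply use the first few copies of $CU(D')$ and ignore the rest, which is harmless because the cross-copy distances in $CU(D)$ depend only on the $r_i=m$. The only real obstacle is bookkeeping around basepoints and the transition from the finite $\{r_i\}_{i=1}^k$-union that appears in the decomposition lemma to the countably infinite $\{r_i\}_{i\ge 1}$-union used to define $CU(D)$, which is handled precisely by the diameter observation of Definition \ref{InfiniterDisjointUnionDef}.
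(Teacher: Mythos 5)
Your proof is correct and follows essentially the same route as the paper: induct on $|D|$, split off $m=\max(D)$, and take an $\{m,m,\ldots\}$-union of copies of $CU(D\setminus\{m\})$, using the equivalence relation $d(x,y)<m$ to decompose an arbitrary countable $D$-ultrametric space into countably many $D\setminus\{m\}$-ultrametric classes. If anything, your write-up is more careful than the paper's own one-line proof, which defines $CU(G)$ as the ``$\{r_i\}_{i=1}^m$-union'' with $m$ left undefined (an apparent carry-over from Proposition \ref{boundedfiniteuniversal}), whereas you correctly use a countably infinite union of copies and spell out the universality verification that the paper leaves implicit.
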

\begin{proof}
Let $CU(\{0\})$ be a one-point metric space. Suppose spaces $CU(D)$ are known for all $D$ containing at most $n$ integers and $G$ contains $(n+1)$ integers with $k=\max(G)$. Define $CU(G)$ as the $\{r_i\}_{i=1}^m$-union of $CU(G\setminus \{k\})$ where $r_i=k$ for each $i\leq m$.
\end{proof}

\section{Universal spaces}\label{universal spaces}

In this last section we prove our main results. That is, we give a detailed construction of universal spaces (with respect to coarse embeddings) in the classes of separable metric spaces of asymptotic dimension $0$ and the class of proper metric spaces of asymptotic dimension $0$. For the following two results, let $D_{1},D_{2},\ldots$ be an enumeration of the finite subsets of $\mathbb{N}$ that contain $0$. For each $i\ge 1$ put $r_i=\max(D_i)$
and notice $r_i\to\infty$.

\begin{Theorem}
There is a countable integral ultrametric space $CU$ such that any separable metric space $X$ of asymptotic dimension $0$ coarsely embeds in $CU$.
\end{Theorem}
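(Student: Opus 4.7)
The plan is to reduce to the ultrametric setting via Theorem \ref{countableintegral}, and then to construct $CU$ by joining all of the countable bounded universal spaces $CU(D_i)$ of Proposition \ref{boundedcountableuniversal} into one coarse disjoint union. Concretely, I would take $CU$ to be the $\{r_i\}_{i\ge 1}$-union (Definition \ref{InfiniterDisjointUnionDef}) of pointed spaces $(CU(D_i),\ast_i)$, with basepoints $\ast_i\in CU(D_i)$ chosen arbitrarily. Since each $CU(D_i)$ is a countable $D_i$-ultrametric space, since the $r$-union of two integral ultrametrics along an integer $r$ is itself an integral ultrametric (Lemma \ref{rDisjointUnionLemma}), and since $r_i\to\infty$, Lemma \ref{InfiniteDisjointUnionIsCoarse} presents $CU$ as a countable integral ultrametric space which is a coarse disjoint union of the family $\{CU(D_i)\}_{i\ge 1}$.

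Given $X$ separable of asymptotic dimension $0$, Theorem \ref{countableintegral} replaces $X$, up to coarse equivalence, with a countable integral ultrametric space $(Y,d)$, so it suffices to coarsely embed $Y$ into $CU$. If $Y$ is bounded, then Proposition \ref{boundedcountableuniversal} gives an isometric embedding $Y\hookrightarrow CU(D_i)\subseteq CU$ for any $D_i$ containing the distances of $Y$. Otherwise, I fix $y_0\in Y$, list the infinite set of positive distances to $y_0$ as $s_1<s_2<\cdots$ (so $s_n\to\infty$), and pick $y_n\in Y$ with $d(y_n,y_0)=s_n$; Corollary \ref{UltraAsDisjUnion} combined with Lemma \ref{InfiniteDisjointUnionIsCoarse} then presents $Y$ itself as a coarse disjoint union of its bounded pieces $X_1=\{y\in Y : d(y,y_0)\le s_1\}$ and $X_n=\{y\in Y : d(y,y_0)=s_n\}$ for $n\ge 2$.

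By the ultrametric inequality each $X_n$ is a countable $E_n$-ultrametric space with $E_n\subseteq\{0,1,\ldots,s_n\}$ finite. Inductively I would pick pairwise distinct indices $i_n$ in the enumeration of Section \ref{universal spaces} so that $E_n\subseteq D_{i_n}$; this is always possible because infinitely many terms of the enumeration $\{D_j\}$ contain any given finite set. Proposition \ref{boundedcountableuniversal} then supplies isometric embeddings $X_n\hookrightarrow CU(D_{i_n})$. The subfamily $\bigsqcup_n CU(D_{i_n})$ inherits a coarse disjoint union structure from $CU$, so Proposition \ref{TwoDifferentCoarseUnionsProp}(1), applied to the two $\mathbb{N}$-indexed coarse disjoint unions $Y=\bigsqcup_n X_n$ and $\bigsqcup_n CU(D_{i_n})$, assembles the maps $X_n\hookrightarrow CU(D_{i_n})$ into a coarse embedding $Y\to\bigsqcup_n CU(D_{i_n})\hookrightarrow CU$.

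The step I expect to be the main obstacle is ensuring injectivity of $n\mapsto i_n$. If two pieces $X_n$ and $X_m$ were routed into the same $CU(D)$, their images would be at uniformly bounded distance in $CU$, while $X_n$ and $X_m$ drift to infinity from each other in $Y$; uniform properness of the resulting map would then fail. The way out is the freedom in the choice of $D_{i_n}$: since every finite $E_n\subseteq\mathbb{N}$ has infinitely many supersets appearing in the enumeration $\{D_j\}$, the inductive selection never gets stuck and the indices $i_n$ can be kept pairwise distinct.
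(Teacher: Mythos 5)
Your proposal is correct and follows essentially the same route as the paper: define $CU$ as the $\{r_i\}$-union of all $CU(D_i)$, reduce to a countable integral ultrametric space via Theorem \ref{countableintegral}, decompose it as a coarse disjoint union of bounded pieces via Corollary \ref{UltraAsDisjUnion}, route each piece into a distinct $CU(D_{i_n})$ containing its distance set, and assemble with Proposition \ref{TwoDifferentCoarseUnionsProp}. Your extra care about the bounded case and about keeping the indices $i_n$ pairwise distinct (the paper achieves the latter by choosing a strictly increasing sequence $n_k$) only makes the argument more explicit, not different.
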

\begin{proof}
Define $CU$ as the $\{r_i\}_{i\ge 1}$-union of all $CU(D_{i})$. We claim that $CU$ is the desired universal space. In light of Theorem \ref{countableintegral} it will suffice to show that if $(X,d)$ is a countable integral ultrametric space, then $X$ coarsely embeds into $CU$. Then let $(X,d)$ be such a space. By Corollary \ref{UltraAsDisjUnion} $X$ can be written as a coarse disjoint union of bounded $G_k$-ultrametric spaces $X_k$. There is a strictly increasing sequence $(n_{k})_{k\in\mathbb{N}}$ such that $G_k\subset D_{n_{k}}$ for each $k\ge 1$. Then, by Proposition \ref{TwoDifferentCoarseUnionsProp}, we have that $X$ embeds into $CU$.
\end{proof}

yyy

\begin{Theorem}
There is a countable and proper integral ultrametric space $PU$ such that any proper metric space $X$ of asymptotic dimension $0$ coarsely embeds in $PU$.
\end{Theorem}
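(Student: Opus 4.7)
The plan is to mimic the preceding proof, replacing each $CU(D_i)$ by the finite universal space $FU(i,D_i)$ from Proposition \ref{boundedfiniteuniversal} (using the index $i$ itself as the cardinality parameter), so that every summand is finite. Define $PU$ to be the $\{r_i\}_{i\ge 1}$-union of the spaces $FU(i,D_i)$, where as in the preceding theorem $r_i=\max(D_i)\to\infty$.

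First, $PU$ has the desired structural properties: iterated application of Lemma \ref{rDisjointUnionLemma} shows that it is an integral ultrametric space (each $FU(i,D_i)$ is such, with integer distances); it is countable as a countable union of finite sets; Lemma \ref{InfiniteDisjointUnionIsCoarse} ensures (since $r_i\to\infty$) that it is a coarse disjoint union of its summands; and Proposition \ref{SeparabilityPropernessOfUnions}, combined with properness of each finite summand, yields that $PU$ is itself proper.

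To verify universality, let $X$ be a proper metric space of asymptotic dimension $0$. By Theorem \ref{countableintegral}, $X$ is coarsely equivalent to a countable integral ultrametric space $Y$ all of whose bounded subsets are finite. By Corollary \ref{UltraAsDisjUnion}, $Y$ is isometric to a coarse disjoint union of bounded pieces $X_k$, with each $X_k$ a $G_k$-ultrametric space for a finite $G_k\subseteq\mathbb{N}$ containing $0$; since bounded subsets of $Y$ are finite, each $X_k$ is finite, of some cardinality $k_k$. The key bookkeeping step is to choose a strictly increasing sequence $n_1<n_2<\cdots$ with $n_k\ge k_k$ and $G_k\subseteq D_{n_k}$ for every $k$; this is possible because every finite subset of $\mathbb{N}$ containing $0$ is contained in $D_i$ for infinitely many $i$, so the constraints can be met simultaneously with arbitrarily large indices. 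Proposition \ref{boundedfiniteuniversal} then provides an isometric embedding $X_k\hookrightarrow FU(n_k,D_{n_k})\subseteq PU$.

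These isometric embeddings assemble, via Proposition \ref{TwoDifferentCoarseUnionsProp}, into a coarse embedding from $Y$ into the subspace $\coprod_k FU(n_k,D_{n_k})$ of $PU$, which inherits a coarse disjoint union structure from $PU$ (condition~2 of Definition \ref{DefCoarseDisjointUnion} passes to any subfamily, with the bounded sets $B_k$ simply taken from those for $PU$). Composing with the coarse equivalence $X\to Y$ yields the required coarse embedding $X\to PU$. The main nontrivial point in the whole proof is precisely the matching step: we must hit summands of $PU$ that are simultaneously large enough in cardinality and rich enough in distance set, and this is exactly why we use the two-parameter family $FU(i,D_i)$ rather than just $CU(D_i)$.
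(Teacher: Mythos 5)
Your proof is correct and follows essentially the same route as the paper: decompose $X$ (after passing to an integral ultrametric model via Theorem \ref{countableintegral} and Corollary \ref{UltraAsDisjUnion}) into a coarse disjoint union of finite pieces, match each piece to a summand $FU(m,D)$ of a suitably chosen coarse disjoint union $PU$, and assemble via Proposition \ref{TwoDifferentCoarseUnionsProp}. The only differences are cosmetic --- you use the diagonal family $FU(i,D_i)$ with $r_i=\max(D_i)$ where the paper enumerates all $FU(m,D_n)$ with larger gluing constants --- and you are in fact slightly more careful than the paper in making explicit the cardinality constraint $n_k\ge |X_k|$ needed for the matching step.
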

\begin{proof}
We again use the enumeration $D_{1},D_{2},\ldots$ of the finite subsets of $\mathbb{N}$ that contain $0$. The set of all $FU(m,D_{n})$ (where defined) is countable. Enumerate these spaces and denote this sequence $\{Y_{1},Y_{2},\ldots\}$. Let $r_i=i+\sum\limits_{j=1}^i diam(Y_j)$ for $i\ge 1$.

We then define $PU$ is the $\{r_i\}$-union of all $Y_{i}$. It is proper by \ref{SeparabilityPropernessOfUnions}. Let $(X,d)$ be a countable proper metric space of asymptotic dimension $0$. By Theorems \ref{countableintegral} and \ref{UltraAsDisjUnion} we may assume without loss of generality that that $X$ can be written as a coarse disjoint union of finite $G_k$-ultrametric spaces $X_k$. There is a strictly increasing sequence $(n_{k})_{k\in\mathbb{N}}$ such that $G_k\subset D_{n_{k}}$ for each $k\ge 1$ and $D_{n_{k}}$ is a proper subset of $D_{n_{k+1}}$ for each $k\ge 1$.
Then, by Proposition \ref{TwoDifferentCoarseUnionsProp} we have that $X$ coarsely embeds into $PU$.
\end{proof}

\section{Ultrametric groups as universal spaces}

In this section we show that certain unbounded ultrametric groups are universal in respective categories of spaces of asymptotic dimension $0$.

\begin{Definition}
An \textbf{(integral) ultrametric group} is a group equipped with a left-invariant (integral) ultrametric $d$. 
\end{Definition}

\begin{Proposition}\label{UltrametricsInducedBySubgroups}
Suppose $G$ is a group and $D$ is a discrete subset of non-negative reals containing $0$. Assigning $G$ a left-invariant $D$-ultrametric $d$ is equivalent to picking subgroups $G_a$, $a\in D$, of $G$ satisfying the following conditions:\\
1. $G_0=\{1_G\}$,\\
2. $G_a$ is a subgroup of $G_b$ if $a < b$ belong to $D$,\\
3. $\bigcup\limits_{a\in D}G_a=G$.
\end{Proposition}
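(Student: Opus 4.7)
The plan is to exhibit mutually inverse constructions in both directions. Given a left-invariant $D$-ultrametric $d$, I would define the closed balls about the identity
\[ G_a := \{g \in G : d(g, 1_G) \leq a\}, \quad a \in D. \]
Conditions 1--3 are then immediate: $G_0 = \{1_G\}$ since $d$ is a metric, the nesting $G_a \subseteq G_b$ for $a < b$ is clear from the definition, and $\bigcup_{a \in D} G_a = G$ holds because $d$ takes values in $D$, so every $g$ lies in $G_{d(g,1_G)}$. The point requiring care is that each $G_a$ is a subgroup. Closure under inversion follows from left-invariance, which gives $d(g^{-1}, 1_G) = d(1_G, g) = d(g, 1_G)$. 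Closure under multiplication uses both left-invariance and the ultrametric inequality: for $g, h \in G_a$,
\[ d(gh, 1_G) = d(h, g^{-1}) \leq \max\bigl(d(h, 1_G), d(1_G, g^{-1})\bigr) \leq a. \]

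Conversely, given subgroups $\{G_a\}_{a \in D}$ satisfying the three conditions, I would define
\[ d(g, h) := \min\{a \in D : g^{-1}h \in G_a\}. \]
The minimum exists: condition 3 makes the set nonempty, and the discreteness of $D$ in $[0, \infty)$ guarantees a least element. I would verify the axioms in sequence: nondegeneracy $d(g, h) = 0 \iff g = h$ from condition 1; symmetry from the fact that subgroups are closed under inverses, so $g^{-1}h \in G_a$ iff $h^{-1}g \in G_a$; the ultrametric inequality by observing that if $g^{-1}h \in G_a$ and $h^{-1}k \in G_b$ with $c = \max(a, b)$, then both factors lie in the subgroup $G_c$, hence $g^{-1}k = (g^{-1}h)(h^{-1}k) \in G_c$, giving $d(g, k) \leq c$; and left-invariance from the identity $(fg)^{-1}(fh) = g^{-1}h$.

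To close the equivalence, I would check that the two constructions are mutually inverse. Starting from $d$ and passing to the family $\{G_a\}$, then recomputing $d'(g, h) = \min\{a \in D : g^{-1}h \in G_a\}$, left-invariance gives $d(g^{-1}h, 1_G) = d(g, h)$, which lies in $D$, so the minimum is attained exactly at $d(g, h)$; hence $d' = d$. Starting instead from $\{G_a\}$, forming $d$, and then reading off the closed balls recovers each $G_a$ because of the nestedness in condition 2.

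The only mildly delicate step is well-definedness of the minimum in the converse construction, which is where the discreteness assumption on $D$ is really used; everything else is a routine unwinding of the ultrametric and left-invariance axioms.
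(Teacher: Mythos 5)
Your proposal is correct and follows essentially the same route as the paper's proof: closed balls about the identity in one direction, and $d(g,h)$ as the least $a$ with $g^{-1}h\in G_a$ in the other, with the same use of left-invariance and the ultrametric inequality. The only difference is that you spell out more of the routine verifications (symmetry, left-invariance, and that the two constructions are mutually inverse), which the paper leaves implicit.
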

\begin{proof}
Given a left-invariant $D$-ultrametric $d$ on $G$ and given $a\in D$ define $G_a$ as all $g\in G$ satisfying $d(g,1_G)\leq a$. Notice $g\in G_a$ implies $g^{-1}\in G_a$ as $d(g^{-1},1_G)=d(g\cdot g^{-1},g\cdot 1_G)=d(1_G,g)$.
Also, if $g,h\in G_a$, then $d(g\cdot h,1_G)\leq \max(d(g\cdot h,g),d(g,1_G))=\max(d(h,1_G),d(g,1_G))\leq a$. It is obvious that $\{G_a\}_{a\in D}$ satisfy Conditions 1-3.

Given $\{G_a\}_{a\in D}$ satisfying Conditions 1-3 define $d(g,h)$
as the infimum of $a\in D$ satisfying $g^{-1}\cdot h\in G_a$. If $d(g,h),d(h,k)\leq a$, then $g^{-1}\cdot h\in G_a$ and $ h^{-1}\cdot k\in G_a$,
so their product $g^{-1}\cdot k$ belongs to $G_a$ and $d(g,k)\leq a$. That means $d$ is an ultrametric, indeed.
\end{proof}

\begin{Definition}
Given a discrete subset $D$ of non-negative reals containing $0$ and given subgroups $G_a$, $a\in D$, of $G$ satisfying the following conditions:\\
1. $G_0=\{1_G\}$,\\
2. $G_a$ is a subgroup of $G_b$ if $a < b$ belong to $D$,\\
3. $\bigcup\limits_{a\in D}G_a=G$,\\
the ultrametric $d$ in \ref{UltrametricsInducedBySubgroups} is said to be \textbf{induced} by $\{G_a\}_{a\in D}$.
\end{Definition}

\begin{Proposition}\label{CoarseEquivalenceUltrametricStructures}
Suppose $G$ is a group and $d_i$, $i=1,2$, are two ultrametric metrics induced by families $\{G^i_a\}_{a\in D_i}$ of subgroups of $G$.
$(G,d_1)$ is coarsely equivalent to $(G,d_2)$ if and only if for each $a\in D_i$ there is $b\in D_j$, $j\ne i$, such that $G^i_a\subset G^j_b$.
\end{Proposition}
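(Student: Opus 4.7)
My proof plan separates the two implications of the equivalence.

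For the ``if'' direction, the plan is to show that the identity map $\mathrm{id}:G \to G$ is itself a coarse equivalence from $(G,d_1)$ to $(G,d_2)$. Left-invariance of each $d_i$ implies $d_i(x,y) = d_i(1_G, x^{-1}y)$, and the $d_i$-ball of radius $R$ around $1_G$ coincides with $G^i_a$ for the largest $a \in D_i$ with $a \le R$. Uniform bornology of the identity then reduces to the containment $G^1_a \subseteq G^2_b$, and uniform properness reduces to the reverse containment. Coarse surjectivity of the identity is trivial.

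For the ``only if'' direction, assume a coarse equivalence $f: (G, d_1) \to (G, d_2)$ exists. Since left translations are isometries in both metrics, I would first replace $f$ by $L_{f(1_G)^{-1}} \circ f$ so that $f(1_G) = 1_G$, and similarly pick a quasi-inverse $g$ normalized so that $g(1_G) = 1_G$, with $d_1(g \circ f(x), x) \le C$ and $d_2(f \circ g(y), y) \le C$ for all $x,y$. Uniform bornology of $f$ then gives $f(G^1_a) \subseteq G^2_{S(a)}$ for some $S(a) \in D_2$.

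The main task is to upgrade this bound on $f(G^1_a)$ to a bound on $G^1_a$ itself. The plan is to apply the ultrametric inequality
\[ d_2(x, 1_G) \le \max\bigl(d_2(x, f(x)),\; d_2(f(x), 1_G)\bigr) \]
for $x \in G^1_a$, where the second term is controlled by $S(a)$. The term $d_2(x, f(x))$ is to be bounded via the chain $d_2(x, f(x)) \le \max(d_2(x, f(g(x))), d_2(f(g(x)), f(x)))$, where the first summand is at most $C$ and the second is controlled by uniform bornology of $f$ applied to $d_1(g(x), x)$, which in turn is controlled by the closeness of $g \circ f$ to the identity and uniform bornology of $g$.

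The hard part will be closing the resulting cascade into a finite uniform constant: the abstract coarse equivalence does not force $f$ to be close to the identity in $d_2$, so the argument must exploit the ultrametric ``strong triangle equality'' (if two sides of an ultrametric triangle are unequal, the longest equals the larger of them) together with uniform properness of $g$. Concretely, from a hypothetical sequence $x_n \in G^1_a$ with $d_2(x_n, 1_G) \to \infty$, strong triangle equality forces $d_2(x_n, f(x_n)) = d_2(x_n, 1_G) \to \infty$; pushing forward through $g$, the bounded sequence $g(f(x_n))$ and the unbounded sequence $g(x_n)$ should yield the needed contradiction with uniform properness of $g$. The symmetric argument, interchanging the roles of $d_1,d_2$ and $f,g$, gives the reverse containment.
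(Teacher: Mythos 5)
Your ``if'' direction is correct and is essentially the paper's entire argument: the paper reads ``$(G,d_1)$ is coarsely equivalent to $(G,d_2)$'' as ``the \emph{identity map} $(G,d_1)\to(G,d_2)$ is a coarse equivalence'' (its proof opens with ``Assume the identity $(G,d_1)\to(G,d_2)$ is large scale continuous''), and under that reading both implications are the one-line observation that the $d_i$-ball of radius $a$ about $1_G$ is exactly $G^i_a$, which is your first paragraph. The trouble is your ``only if'' direction, where you take the hypothesis to be an arbitrary coarse equivalence $f$. The gap you flag --- ``closing the resulting cascade'' --- is not closable, because under that reading the statement is false. Concretely: let $G=\bigoplus_{n\ge1}\mathbb{Z}/2$ with basis $(e_n)$, let $G^1_n=\langle e_1,\dots,e_n\rangle$, and let $G^2_1=\langle e_1,e_3,e_5,\dots\rangle$ and $G^2_n=G^2_1+\langle e_2,e_4,\dots,e_{2n-2}\rangle$ for $n\ge2$. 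Then $G^2_1$ is infinite, hence contained in no $G^1_b$, so the cofinality condition fails; yet $(G,d_2)$ is coarsely equivalent to $(G,d_1)$, since the quotient map $(G,d_2)\to G/G^2_1$ changes distances by at most $1$, and the group isomorphism $G/G^2_1\to G$ sending $\bar e_{2k}$ to $e_k$ carries the quotient filtration onto $\{G^1_{n-1}\}$ and so shifts distances by exactly $1$.

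The precise points where your scheme breaks are these. To bound $d_2(x,f(x))$ you need control of $d_1(g(x),x)$; but $g$ is being applied to $x$ viewed as a point of $(G,d_2)$, where $d_2(x,1_G)$ is the very quantity you are trying to bound, so neither the closeness of $g\circ f$ to the identity nor the bornology of $g$ says anything about $g(x)$. In the contradiction version you correctly deduce $d_2(x_n,f(x_n))=d_2(x_n,1_G)\to\infty$ from the strong triangle equality, but that divergence severs the only link between $g(x_n)$ and $g(f(x_n))$: bornology of $g$ gives no bound on $d_1(g(x_n),g(f(x_n)))$, and having $g(x_n)\to\infty$ while $g(f(x_n))$ stays bounded contradicts nothing (this is exactly what happens in the example above). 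The fix is interpretive rather than technical: read the proposition, as the paper does and as its later application to mutually cofinal filtrations requires, as a statement about the identity map; then your ``only if'' direction is just your ``if'' direction with the roles of $d_1$ and $d_2$ interchanged, and the proof is complete.
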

\begin{proof}
Assume the identity $(G,d_1)\to (G,d_2)$ is large scale continuous (aka bornologous)
and $a\in D_1$. There is $b\in D_2$ such that $d_1(g,h)\leq a$ implies $d_2(g,h)\leq b$, so $g\in G^1_a$ implies $g\in G^1_b$
as $d_1(g,1_G)\leq a$ implies $d_2(g,1_G)\leq b$ and $g\in G^2_b$.

The reverse implication is similar.
\end{proof}

\begin{Proposition}\label{CoarseEmbeddingsViaBoundedSubsets}
Suppose $(X,d_X)$ is an integral ultrametric space and $(G,d_G)$ is an integral ultrametric group. If every bounded subset $B$ of $X$ isometrically embeds in $G$, then $(X,d_X)$ coarsely embeds in $(G,d_G)$.
\end{Proposition}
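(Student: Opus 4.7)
The plan is to construct an explicit isometric embedding $\phi : X \to G$, which is a fortiori a coarse embedding. The idea is to decompose $X$ into bounded ``annuli'' around a fixed basepoint, embed each annulus (together with the basepoint) into $G$ with that basepoint normalized to $1_G$, and observe that the ultrametric isosceles principle forces these local embeddings to agree globally. Recall that in any ultrametric space, if $d(a,c)\neq d(b,c)$ then $d(a,b)=\max(d(a,c),d(b,c))$.

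If $X$ is bounded the hypothesis immediately gives an isometric embedding $X\to G$, so assume $X$ is unbounded. Fix $x_0\in X$ and enumerate the positive values of $d_X(\cdot,x_0)$ in increasing order as $0<r_1<r_2<\cdots$; since the distances are integers and $X$ is unbounded, $r_n\to\infty$. By Corollary \ref{UltraAsDisjUnion} we obtain the partition $X=X_1\sqcup X_2\sqcup\cdots$ with $X_1=\{x:d_X(x,x_0)\leq r_1\}$ and $X_n=\{x:d_X(x,x_0)=r_n\}$ for $n\geq 2$. Each set $\{x_0\}\cup X_n$ is bounded (its diameter is at most $r_n$), so the hypothesis yields an isometric embedding $\{x_0\}\cup X_n\to G$; using left-invariance of $d_G$, compose with a left translation to arrange $x_0\mapsto 1_G$, and call the result $\psi_n$.

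Define $\phi:X\to G$ by $\phi(x)=\psi_n(x)$ for $x\in X_n$. This is well-defined because the only point lying in more than one candidate is $x_0\in X_1$, which is sent to $1_G$ by $\psi_1$. The only nontrivial isometry check is for $x\in X_n$, $y\in X_m$ with $n<m$: the isosceles principle applied in $X$ to the triangle $x,y,x_0$ (using $d_X(x,x_0)\leq r_n<r_m=d_X(y,x_0)$) gives $d_X(x,y)=r_m$, and the same principle applied in $G$ to $\phi(x),\phi(y),1_G$ (using $d_G(\phi(x),1_G)\leq r_n<r_m=d_G(\phi(y),1_G)$) gives $d_G(\phi(x),\phi(y))=r_m$ as well.

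No real obstacle arises; the critical insight is that pinning the basepoints of all local embeddings to $1_G$ lets the isosceles principle automatically enforce global compatibility, yielding an isometric embedding, which is strictly stronger than the stated coarse embedding.
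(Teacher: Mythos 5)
Your proof is correct, and it takes a genuinely different (and in fact stronger) route than the paper's. The paper also decomposes $X$ into bounded annuli about $x_0$ and embeds each one isometrically with its marked point sent to $1_G$, but it then left-translates the $n$-th image by a group element $g_n$ chosen so far from $1_G$ that the images land in pairwise far-apart annuli $C_n$ of $G$; the conclusion is then extracted from the coarse-disjoint-union machinery (Corollary \ref{UltraAsDisjUnion} together with Proposition \ref{TwoDifferentCoarseUnionsProp}), and one only obtains a coarse embedding. You instead pin \emph{every} local embedding at $1_G$, so the images are nested rather than spread out, and you let the ultrametric isosceles property (applied once in $X$ and once in $G$) force all cross-distances to agree automatically; your verification that $d_G(\phi(x),\phi(y))=r_m=d_X(x,y)$ for $x\in X_n$, $y\in X_m$, $n<m$, is complete and correct, and the left-invariance of $d_G$ is exactly what makes the normalization $x_0\mapsto 1_G$ harmless. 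What each approach buys: yours yields an isometric embedding (strictly stronger than claimed) with no need to select elements $g_n$ escaping to infinity, and it makes clear that the separation step in the paper is superfluous in the ultrametric setting; the paper's translation trick is more robust in that it would survive weakening the hypothesis to coarse embeddings of bounded pieces, and it keeps the images coarsely independent, which is the pattern reused elsewhere in the paper (e.g.\ in Corollary \ref{BasicCoarseEmbeddingIntoAGroup}). Both arguments correctly reduce to the unbounded case and use that an integral ultrametric space has a discrete, hence enumerable and divergent, set of distances to $x_0$.
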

\begin{proof}
Of interest is only the case of $X$ being unbounded, so $G$ is also unbounded. Pick $x_0\in X$ and a sequence $\{x_n\}_{n\ge 1}$ of points in $X$ such that $d(x_{n+1},x_0) > d(x_n,x_0)+1$ for each $n\ge 1$.
Put $r_n=d(x_n,x_0$ for $n\ge 1$ and pick an isometric embedding
$i_n:B_n\to G$, where $B_n=\{x\in X | r_{n-1} < d(x,x_0) \leq r_n$ for $n\ge 2$ and $B_1=\{x\in X | d(x,x_0) \leq r_1$. We may assume $i_n(x_n)=1_G$ for each $n\ge 1$.
Now pick a sequence $\{g_n\}_{n\ge 1}$ of elements of $G$ such that $d_G(g_1,1_G) > r_1$
and $s_n:=d_G(g_n,1_G) > d(g_{n-1},1_G)+diam(B_n)$. Replacing $i_n$
by $j_n:=g_n\cdot i_n$ we obtain a sequence of isometric embeddings
of $B_n$ into $C_n:= \{g\in G | s_{n-1} < d(g,1_G) \leq s_n\}$.
By \ref{UltraAsDisjUnion} $X$ coarsely embeds in $G$.
\end{proof}

\begin{Corollary}\label{BasicCoarseEmbeddingIntoAGroup}
Suppose $(X,d_X)$ is an integral ultrametric space such that for each $n$ 
there is a cardinal number $c(n)$ with the property that each ball $B(x,n+2)$, $x\in X$, has cardinality at most $c(n)$. If $(G,d_G)$ is an integral ultrametric group induced by a sequence of subgroups $\{G_n\}_{n\ge 1}$ with the property that the cardinality of cosets of $G_n$ in $G_{n+1}$ is at least $c(n)$ for each $n\ge 1$, then $(X,d_X)$ coarsely embeds in $(G,d_G)$.
\end{Corollary}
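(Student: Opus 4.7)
The plan is to apply Proposition \ref{CoarseEmbeddingsViaBoundedSubsets}, which reduces the task to producing an isometric embedding of every bounded subset of $X$ into $G$. Since any bounded subset lies in some closed ball $B(x_0, m) \subset X$ of integer radius $m$, it suffices to establish, by induction on $m \ge 0$, the following more structured claim: every such ball $B(x_0, m)$ embeds isometrically into the subgroup $G_m$ via a map sending $x_0$ to $1_G$.

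The base case $m = 0$ is immediate since $B(x_0, 0) = \{x_0\}$ and $G_0 = \{1_G\}$. For the inductive step I would partition $B = B(x_0, m+1)$ using the equivalence relation $x \sim y \Leftrightarrow d_X(x, y) \le m$; the ultrametric inequality shows each class $C_i$ equals $B(y, m)$ for any $y \in C_i$, while any two points drawn from distinct classes are at distance exactly $m+1$. Fix $x_0 \in C_0$ and base points $x_i \in C_i$ for $i \ge 1$. By the inductive hypothesis, each $C_i$ embeds isometrically via some $\phi_i \colon C_i \to G_m$ with $\phi_i(x_i) = 1_G$. The number of classes is at most $|B(x_0, m+1)| \le |B(x_0, m+2)| \le c(m)$, and by hypothesis $[G_{m+1} : G_m] \ge c(m)$, so I may select distinct coset representatives $g_0 = 1_G, g_1, g_2, \ldots$ of $G_m$ in $G_{m+1}$, one per class, and define $\Phi \colon B \to G_{m+1}$ by $\Phi(y) = g_i \cdot \phi_i(y)$ for $y \in C_i$.

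To verify $\Phi$ is an isometric embedding I would distinguish two cases. For $y, y' \in C_i$, left-invariance of $d_G$ yields $d_G(\Phi(y), \Phi(y')) = d_G(\phi_i(y), \phi_i(y')) = d_X(y, y')$. For $y \in C_i$ and $y' \in C_j$ with $i \ne j$, the element $\Phi(y)^{-1} \Phi(y')$ lies in $G_{m+1} \setminus G_m$ because the cosets $g_i G_m$ and $g_j G_m$ are distinct, so $d_G(\Phi(y), \Phi(y'))$ lies in the interval of reals strictly greater than $m$ and at most $m+1$; by integrality of the metric this distance equals $m+1$, matching $d_X(y, y') = m+1$.

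The step I expect to be the main obstacle is the index bookkeeping: the cardinality bound $|B(x, n+2)| \le c(n)$ is shifted by two, and the subgroup hypothesis is stated for $n \ge 1$, so the inequality $[G_1 : G_0] \ge c(0)$ implicitly needed at the very first inductive step is not given literally. I would resolve this by taking $c$ to be the extremal, automatically nondecreasing choice $c(n) = \sup_x |B(x, n+2)|$, so that the inductive inequality $[G_{m+1}:G_m] \ge c(m) \ge |B(x_0, m+1)|$ closes at every step $m \ge 1$, and by handling the lowest levels separately using that the corresponding balls are finite of size at most $c(0)$, which must be accommodated inside $G_1$ under the natural reading of the subgroup hypothesis.
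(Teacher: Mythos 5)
Your proposal is correct and follows essentially the same route as the paper: an induction on the radius of balls, partitioning $B(x_0,m+1)$ into equivalence classes under $d_X(x,y)\le m$, embedding each class into $G_m$ by the inductive hypothesis, translating by distinct coset representatives of $G_m$ in $G_{m+1}$, and concluding via Proposition \ref{CoarseEmbeddingsViaBoundedSubsets}. The index bookkeeping you flag at the lowest level is glossed over in the paper as well, and your resolution of it is reasonable.
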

\begin{proof}
Suppose each bounded subset of $X$ of diameter at most $n$ isometrically embeds in $G$. Therefore, for each $x\in X$, there is an isometric embedding
$i_x:B(x,n+1)\to G_n$ such that $i_x(x)=1_G$. Suppose $x_0\in X$. Consider the equivalence relation $x\sim y$ on $B(x_0,n+2)$ defined by $d_X(x,y) < n+1$. For each equivalence class $c$ not containing $x_0$ choose $x(c)\in B(x_0,n+2)\setminus B(x_0,n+1)$ and $g_c\in G_{n+1}$ such that if $c\ne k$, then $g_c^{-1}\cdot g_k\notin G_n$. 
Extend $i_{x_0}$ over $B(x_0,n+2)$ to a function $j$ by sending $x(c)$ to $g_c$ and by sending
any $x$ equivalent to $x_c$ to $g_c\cdot i_{x(c)}(x)$.
Notice $j$ is an isometric embedding when restricted to each equivalence class, the images of different equivalence classes are disjoint, and
if $d_X(x,y)=n+1$, then $d_G(j(x),j(y))=n+1$. That means $j$ is an isometric embedding.
\end{proof}

\begin{Corollary}
\label{UniversalGroupForSeparableCase}
Suppose $G$ is a countable group that is the union of an increasing sequence of its subgroups $\{G_i\}_{i\ge 1}^\infty$ with the property that the index of $G_i$ in $G_{i+1}$ is infinite for each $i\ge 1$. There is an integral ultrametric $d_G$ on $G$ such that $(G,d_G)$ is a universal space in the category of separable metric spaces of asymptotic dimension $0$.
\end{Corollary}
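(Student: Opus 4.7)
The plan is to define $d_G$ from the given subgroup chain via Proposition \ref{UltrametricsInducedBySubgroups}, then deduce universality by combining Theorem \ref{countableintegral} with Corollary \ref{BasicCoarseEmbeddingIntoAGroup}.

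First I would set $H_0 := \{1_G\}$ and $H_n := G_{n+1}$ for every $n \ge 1$. The family $\{H_n\}_{n \ge 0}$ is an increasing chain of subgroups with $H_0 = \{1_G\}$ and $\bigcup_{n \ge 0} H_n = G$, so Proposition \ref{UltrametricsInducedBySubgroups} equips $G$ with a left-invariant integral ultrametric $d_G$ induced by this chain. The one-step shift is deliberate: it guarantees $[H_1:H_0] = |G_2| = \infty$ (forced by $[G_2:G_1] = \infty$), alongside $[H_{n+1}:H_n] = [G_{n+2}:G_{n+1}] = \infty$ for each $n \ge 1$, so every consecutive index in the new chain is infinite.

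For universality, let $X$ be a separable metric space with $\asdim X = 0$. Theorem \ref{countableintegral} provides a countable integral ultrametric space $(Y, \rho)$ coarsely equivalent to $X$, so it suffices to coarsely embed $Y$ into $(G, d_G)$. Since $Y$ is countable, every ball $B(y, n+2)$ in $Y$ has cardinality at most $\aleph_0$, so I take $c(n) = \aleph_0$ in Corollary \ref{BasicCoarseEmbeddingIntoAGroup}. The hypothesis $[H_{n+1}:H_n] \ge c(n)$ then holds for every $n$ by the previous paragraph, so that corollary delivers a coarse embedding $Y \hookrightarrow (G, d_G)$; composing with a coarse equivalence $X \to Y$ completes the argument.

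The real obstacle here is small and purely structural: without the shift, a finite $G_1$ would make the first index finite and could starve the base step of the inductive embedding construction in Corollary \ref{BasicCoarseEmbeddingIntoAGroup}. Moving to $H_n = G_{n+1}$ absorbs $G_1$ into $H_1$ and uses the infinitude of $G_2$ to keep all indices infinite. The remaining checks -- that $\{H_n\}$ satisfies the hypotheses of Proposition \ref{UltrametricsInducedBySubgroups}, and that any ultrametric space automatically has asymptotic dimension $0$ so $(G,d_G)$ itself lies in the target category -- are routine.
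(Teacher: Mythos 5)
Your proof is correct and follows exactly the route the paper intends: the corollary is stated without proof precisely because it is the combination of Proposition \ref{UltrametricsInducedBySubgroups}, Theorem \ref{countableintegral}, and Corollary \ref{BasicCoarseEmbeddingIntoAGroup} with $c(n)=\aleph_0$ that you spell out (mirroring the argument the paper does write out for the bounded-geometry theorem that follows). Your re-indexing $H_0=\{1_G\}$, $H_n=G_{n+1}$ is a sensible way to reconcile the hypothesis with the normalization $G_0=\{1_G\}$ required by Proposition \ref{UltrametricsInducedBySubgroups}, and the remaining checks you list are indeed routine.
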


\begin{Corollary}
Let $G$ be a countable vector space over the rationals $Q$ that is of infinite algebraic dimension. There is an integral ultrametric $d_G$ on $G$ such that $(G,d_G)$ is a universal space in the category of separable metric spaces of asymptotic dimension $0$.
\end{Corollary}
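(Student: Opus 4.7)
The plan is to reduce this corollary directly to the preceding Corollary \ref{UniversalGroupForSeparableCase} by exhibiting an appropriate filtration of $G$ by subgroups. Since $G$ is countable and of infinite algebraic dimension over $\mathbb{Q}$, one can choose a Hamel basis $\{e_i\}_{i\ge 1}$ of $G$ over $\mathbb{Q}$, indexed by $\mathbb{N}$. For each $i\ge 1$ set
\[G_i := \mathbb{Q}\langle e_1,\ldots,e_i\rangle,\]
the $\mathbb{Q}$-linear span of the first $i$ basis vectors. This is an additive subgroup of $G$, and by construction $G_i\subset G_{i+1}$ for every $i$.

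Next, I would verify the three conditions required by Corollary \ref{UniversalGroupForSeparableCase}. The inclusion $G_i\subset G_{i+1}$ is immediate, and $\bigcup_{i\ge 1} G_i=G$ because every element of $G$ is a finite $\mathbb{Q}$-linear combination of basis vectors and hence lies in some $G_i$. Finally, to see that the index of $G_i$ in $G_{i+1}$ is infinite, observe that $G_{i+1}=G_i\oplus \mathbb{Q}\cdot e_{i+1}$ as abelian groups, so $G_{i+1}/G_i\cong \mathbb{Q}$, which is an infinite group; the cosets $\{q\cdot e_{i+1}+G_i : q\in\mathbb{Q}\}$ are pairwise distinct.

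Having checked the hypothesis, applying Corollary \ref{UniversalGroupForSeparableCase} immediately yields an integral ultrametric $d_G$ on $G$ (induced, as in Proposition \ref{UltrametricsInducedBySubgroups}, by the filtration $\{G_i\}$) such that $(G,d_G)$ is universal in the category of separable metric spaces of asymptotic dimension $0$. There is essentially no serious obstacle here; the whole content is that an infinite-dimensional countable $\mathbb{Q}$-vector space automatically admits a filtration by subgroups of infinite index, which is a basic linear-algebra fact. The only mild point worth emphasizing in the write-up is the verification that each quotient $G_{i+1}/G_i$ is infinite, and this is handled by the one-dimensional extension argument above.
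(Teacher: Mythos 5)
Your proposal is correct and is exactly the argument the paper intends: the corollary is stated as an immediate special case of Corollary \ref{UniversalGroupForSeparableCase}, obtained by filtering $G$ by the spans $G_i$ of the first $i$ vectors of a Hamel basis and noting $G_{i+1}/G_i\cong\mathbb{Q}$ is infinite. The paper leaves this verification implicit, and your write-up supplies precisely the missing routine details.
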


\begin{Theorem}
Suppose $G$ is a countable group that is the union of a strictly increasing sequence of its finite subgroups $\{G_i\}_{i\ge 1}^\infty$.There is a proper integral ultrametric $d_G$ on $G$ such that $(G,d_G)$ is a universal space in the category of metric spaces of bounded geometry that have asymptotic dimension $0$.
\end{Theorem}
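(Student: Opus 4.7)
The plan is to mirror Corollary~\ref{UniversalGroupForSeparableCase} but to compensate for the new properness requirement (forced by each $G_i$ being finite) by \emph{stretching} the integral ultrametric model of the source space. The point is that one can no longer force $[\tilde G_{a+1}:\tilde G_a]=\infty$, but by a sparse choice of subfiltration one can still make these indices diverge to infinity, and then a suitable rescaling of $Y$ slows its ball-growth enough for the hypothesis of Corollary~\ref{BasicCoarseEmbeddingIntoAGroup} to be met.

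First I would build $d_G$. Pick a strictly increasing sequence $0=b(0)<b(1)<b(2)<\cdots$ with $b(a+1)-b(a)\to\infty$ and set $\tilde G_a:=G_{b(a)}$ for $a\ge 1$, $\tilde G_0:=\{1_G\}$. Then Proposition~\ref{UltrametricsInducedBySubgroups} yields a proper left-invariant integral ultrametric $d_G$ (proper because each $\tilde G_a$ is finite), and since $[G_{j+1}:G_j]\ge 2$ for every $j$, the consecutive indices satisfy $[\tilde G_{a+1}:\tilde G_a]\ge 2^{b(a+1)-b(a)}\to\infty$.

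Next, given $X$ of bounded geometry with asymptotic dimension $0$, Theorem~\ref{countableintegral} supplies a coarsely equivalent proper countable integral ultrametric space $Y$, which inherits bounded geometry from $X$ (via any bornologous uniformly proper coarse inverse, balls in $Y$ pull back to uniformly bounded subsets of $X$). Let $N_Y$ denote the ball-growth function of $Y$. Because the indices in $d_G$ diverge, I would define inductively a strictly increasing $\phi\colon\mathbb{N}\to\mathbb{N}$ so that $[\tilde G_{a+1}:\tilde G_a]\ge N_Y(m)$ holds for \emph{every} $a\ge\phi(m)-1$ (not merely at $a=\phi(m)-1$); this is possible because for each fixed $m$ only finitely many consecutive indices lie below $N_Y(m)$. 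Setting $Y':=(Y,\phi\circ d_Y)$ preserves the ultrametric inequality by monotonicity of $\phi$ and gives a coarsely equivalent integral ultrametric space whose ball-growth is dominated, level by level, by the consecutive indices of $\{\tilde G_a\}$.

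Finally I would apply Corollary~\ref{BasicCoarseEmbeddingIntoAGroup} to $Y'$: the hypothesis on $c(n)$ is exactly the inequality just arranged, so $Y'$ coarsely embeds into $(G,d_G)$, and composing with $X\sim Y\sim Y'$ produces the required coarse embedding $X\hookrightarrow(G,d_G)$. The main obstacle will be this coordination between $\phi$ and the index sequence---one has to make $\phi$ grow fast enough to slow down $N_Y$ but slowly enough to keep the identity $Y\to Y'$ a coarse equivalence---and the inductive definition above is designed precisely so that the capacity inequality holds at every integer $n$, not just at the "jumps" $n+1=\phi(m)$. A secondary, purely coarse-geometric check is the transport of bounded geometry from $X$ to $Y$.
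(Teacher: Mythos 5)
Your proposal is correct in substance and shares the paper's skeleton: reduce to a proper integral ultrametric space via Theorem~\ref{countableintegral}, then invoke Corollary~\ref{BasicCoarseEmbeddingIntoAGroup}. Where you differ is in how the capacity hypothesis of that corollary is arranged. The paper keeps the source space fixed and, for each $X$ with ball bounds $c(n)$, passes to a subsequence $\{H_n\}$ of $\{G_n\}$ whose consecutive indices exceed $c(n+1)$; the induced ultrametric on $G$ is then coarsely equivalent to $d_G$ by Proposition~\ref{CoarseEquivalenceUltrametricStructures}, so no work is needed on the source side. You make the dual move: fix one filtration $\{\tilde G_a\}$ with diverging consecutive indices once and for all, and rescale the source by $\phi\circ d_Y$. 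Both work; the paper's version is slightly slicker because the coarse equivalence it needs is already packaged in Proposition~\ref{CoarseEquivalenceUltrametricStructures}, whereas you must separately verify that $\phi\circ d_Y$ is an integral ultrametric (this needs $\phi(0)=0$, which you should state) and that the identity $Y\to Y'$ is a coarse equivalence (true since a strictly increasing $\phi:\NN\to\NN$ with $\phi(0)=0$ satisfies $\phi(n)\ge n$). Your version has the advantage of exhibiting a concrete, fixed $d_G$ with a clean quantitative property. Two small repairs: your inductive condition ``$[\tilde G_{a+1}:\tilde G_a]\ge N_Y(m)$ for all $a\ge\phi(m)-1$'' is off by one, since a ball of radius $n+2$ in $Y'$ is a ball of radius $\psi(n+2)$ in $Y$ with $\phi(\psi(n+2))\le n+2$, which only forces $n\ge\phi(\psi(n+2))-2$; demand the inequality for all $a\ge\phi(m)-2$ instead. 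Second, the claim that $Y$ ``inherits bounded geometry'' from $X$ via a coarse inverse is not automatic in the uniformly-finite-balls sense you need (coarse equivalence preserves bounded geometry only in the $\varepsilon$-capacity sense, and a uniformly discrete space coarsely equivalent to $\ZZ$ can have $1$-balls of unbounded cardinality); this must come from the specific skeleton construction behind Theorem~\ref{countableintegral}. The paper silently makes the same assumption, so this is a shared, not a new, gap.
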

\begin{proof}
Consider a proper integral ultrametric space $(X,d_X)$ of bounded geometry and choose natural numbers $c(n)$ with the property that each ball $B(x,n+2)$, $x\in X$, contains at most $c(n)$ elements.
Replace $\{G_n\}$ by its subsequence $\{H_n\}$ such that the index of $H_n$ in $H_{n+1}$ is larger than $c(n+1)$
for each $n\ge 1$. 
By \ref{BasicCoarseEmbeddingIntoAGroup} and \ref{CoarseEquivalenceUltrametricStructures}, $(X,d_X)$ coarsely embeds into $(G,d_G)$.
\end{proof}

\begin{Corollary}
Let $G$ be a countable vector space over the $\mathbb{Z}/2\mathbb{Z}$ that is of infinite algebraic dimension. There is a proper integral ultrametric $d_G$ on $G$ such that $(G,d_G)$ is a universal space in the category of metric spaces of bounded geometry that have asymptotic dimension $0$.
\end{Corollary}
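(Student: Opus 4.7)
The plan is to reduce this corollary directly to the preceding theorem by exhibiting $G$ as an ascending union of strictly increasing finite subgroups, so that the ultrametric furnished by that theorem does the job without further work.

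Since $G$ is a countable vector space over $\mathbb{Z}/2\mathbb{Z}$ of infinite algebraic dimension, we first fix a countably infinite Hamel basis $\{e_1,e_2,\ldots\}$ of $G$. For each $i\ge 1$, let $G_i$ denote the $\mathbb{Z}/2\mathbb{Z}$-linear span of $\{e_1,\ldots,e_i\}$. Then $G_i$ is a subgroup of $G$ of cardinality $2^i$, in particular finite; the inclusions $G_i\subsetneq G_{i+1}$ are strict because $e_{i+1}\notin G_i$ by linear independence; and $\bigcup_{i\ge 1}G_i=G$ because every element of $G$ is by definition a finite $\mathbb{Z}/2\mathbb{Z}$-linear combination of basis vectors.

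Having verified the hypothesis, I would then invoke the preceding theorem applied to the sequence $\{G_i\}_{i\ge 1}$ to obtain a proper integral ultrametric $d_G$ on $G$ such that $(G,d_G)$ is universal in the category of bounded geometry metric spaces of asymptotic dimension $0$. There is no real obstacle here: the only content beyond a direct appeal to the theorem is the elementary observation that over $\mathbb{Z}/2\mathbb{Z}$ the spans of finite basis subsets are automatically finite subgroups, which is exactly the feature that fails for the analogous statement over $\mathbb{Q}$ and which is why the $\mathbb{Q}$-version (stated earlier as a corollary of \ref{UniversalGroupForSeparableCase}) only gives universality in the separable class, not in the bounded geometry class.
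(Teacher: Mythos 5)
Your proof is correct and matches the paper's intent exactly: the corollary is stated without proof precisely because it is an immediate application of the preceding theorem, and your verification that the spans $G_i$ of the first $i$ basis vectors form a strictly increasing sequence of finite subgroups exhausting $G$ is the whole content. Nothing further is needed.
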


\begin{Remark}
See \cite{BZ} and \cite{BHZ} for coarse classifications of groups of asymptotic dimension $0$.
\end{Remark}

\end{document}